\newtheorem{theorem}{Theorem}
\newtheorem{lemma}[theorem]{Lemma}
\newtheorem{Assumption}{Assumption}
\newtheorem{remark}{Remark}
\newcommand{\thmref}[1]{Theorem~\ref{thm:#1}} 
\newcommand{\lemref}[1]{Lemma~\ref{lem:#1}} 
\newcommand{\telque}{\, \mbox{ s.t. } \,} 
\newcommand{\R}{\mathbb{R}} 
\newcommand{\N}{\mathbb{N}} 
\newcommand{\Bcal}{\mathcal{B}}
\newcommand{\Xcal}{\mathcal{X}}
\newcommand{\s}{\sigma}
\newcommand{\paren}[1]{\left( \left. #1 \right. \right)} 
\newcommand{\cro}[1]{\left[ \left. #1 \right. \right]} 
\newcommand{\set}[1]{\left\{ \left. #1 \right. \right\}}
\newcommand{\absj}[1]{\left\lvert #1 \right\rvert} 
\newcommand{\Ind}[1]{{\bf 1}_{#1}}
\renewcommand{\P}{\mathbb{P}}
\renewcommand{\Pr}[1]{\P\paren{#1}}
\newcommand{\Prp}[2]{\P_{#1}\paren{#2}}
\DeclareMathOperator{\supp}{supp} 
\newcommand{\E}{\mathbb{E}} 
\newcommand{\Ex}[1]{\E\cro{#1}}
\newcommand{\Exp}[2]{\E_{#1}\cro{#2}}
\newcommand{\Var}{\mathrm{Var}}
\newcommand{\hyptag}[1]{\tag{\ensuremath{\mathbf{#1}}}} 
\newcommand\QED{\ifhmode\allowbreak\else\nobreak\fi
\quad\nobreak$\Box$\medbreak}
\newcommand{\proofstart}{\par\noindent\sl Proof:\rm\enspace}
\newcommand{\proofend}{\QED\par}
\newenvironment{proof}{\proofstart}{\proofend}
\newcommand{\Sc}{{S}}
\newtheorem{postita}{Post-it}
\begin{document}


\title{The number of potential winners in Bradley-Terry model in random environment}
\author{Raphael Chetrite \and Roland Diel \and Matthieu Lerasle}
\maketitle
%



 




%


\begin{abstract} We consider a Bradley-Terry model in random environment where each player faces each other once. More precisely the strengths of the players are assumed to be random and we study the influence of their distributions on the asymptotic number of potential winners.
First we prove that under mild assumptions, mainly on their moments, if the strengths are unbounded, the asymptotic probability that the best player wins is 1. We also exhibit a sufficient convexity condition to obtain the same result when the strengths are bounded. When this last condition fails, the number of potential winners grows at a rate depending on the tail of the distribution of strengths. We also study the minimal strength required for an additional player to win in this last case. 
\end{abstract}


\noindent
{\small {{\bf Keywords and phrases :} Bradley-Terry model, random environment, paired comparison.\\
{\bf AMS 2010 subject classification :} 60K37, 60G70, 60K40} }


\section{Introduction and Results}

We consider here a model of paired comparisons which may be used as a proxy for sport competition, chess tournament or comparisons of medical treatments. A set of $N$ players (teams, treatments, \dots) called $\set{1,\ldots,N}$ face each other once by pairs with independent outcomes. When $i$ faces $j$, the result is described by a Bernoulli random variable $X_{i,j}$ that is equal to $1$ when $i$ beats $j$ and $0$ if $j$ beats $i$ (hence $X_{i,j}=1-X_{j,i}$). The final result is given by the score $S_i = \sum_{j\neq i}X_{i,j}$ of each player that is its number of victories. We call winner every player that ends up with the highest score.

To each player $i$ is assigned a positive 
random variable $V_i$ modeling its intrinsic value, that is its "strength" or its "merit". 
Given $\mathbb{V}_{1}^{N} =(V_1,\dots,V_N)$, the distribution of $(X_{i,j})_{1\le i<j\le N}$ follows the Bradley-Terry model: all matches are independent and 
\begin{equation}
\forall 1\le i<j\le N,\qquad\Pr{X_{i,j}=1|\mathbb{V}_{1}^{N}}=\frac{V_i}{V_i+V_j}\enspace.
\label{BT}
\end{equation}

The distribution of $\mathbb{V}_{1}^{N}$ 
is chosen as follows. Let $\mathbb{U}_1^N=(U_1,\dots,U_N)$ denote i.i.d. random variables; to avoid trivial issues, suppose that all $U_i$ are almost surely positive. For any $i\in\set{1,\ldots,N}$, $V_{i}$ denotes the $i$-th order statistic of the vector $\mathbb{U}_1^N$: the larger the index of a player is, the "stronger" he is.

 Bradley-Terry model has been introduced independently by Zermelo \cite{Zer:1929} and Bradley and Terry \cite{Bradley_Terry:1952}, it was later generalized to allow ties \cite{ Dav:1970,Rao_Kup:1967} or to incorporate within-pair order effects \cite{Dav_Bea:1977}, see \cite{Bra:1976} for a review. 
Despite its simplicity, it has been widely used in applications for example to model sport tournaments, reliability problems, ranking scientific journals (see \cite{Bra:1976} or more recently \cite{Cat:2012} for references). Bradley-Terry model has also been studied in statistical literature, see for example \cite{David:1963,HasTib:1998,Simons_Yao:1999,Hun:2004,GliJen:2005,YanYangXu:2011} and references therein. 

Nevertheless, Bradley-Terry model has rarely been associated to random environment models (see however \cite{Sir_Red:2009}) and, to the best of our knowledge, never from a strictly mathematical point of view.  The addition of a random environment seems however natural as it allows to manage the heterogeneity of  strengths of players globally, without having to look at each one specifically. It is a method already used fruitfully in other areas such as continuous or discrete random walks (see \cite{Zei:2012} or \cite{DreRam:2014} for recent presentations). Our problem here is to understand how the choice of the distribution for the strengths of players influences the ranking of the players. In particular, does a player with the highest strength ends up with the highest score? And if not, what proportion of players might win? These problematics are related to the articles detailed below.

\begin{itemize}
\item Ben-Naim and Hengartner \cite{Ben_Hen:2007} study the number of players which can win a competition. These authors consider a simple model where the probability of upset $p<\frac{1}{2}$ is independent of the strength of players: 
$$\forall 1\le i<j\le N,\qquad \Pr{X_{i,j}=1}=p1_{i<j}+\left(1-p\right)1_{i>j}\enspace.
\label{simple}$$ For this model, they heuristically show with scaling techniques coming from polymer physics (see \cite{Gen:1979}) that, for large $N$, the number of potential champions behaves as $\sqrt{N}$. In the Bradley-Terry model in random environment, \thmref{SizeSetWinners} shows that the class of possible behaviors for this set is much richer.

\item Simons and Yao \cite{Simons_Yao:1999}  estimate the merits $\mathbb{V}_{1}^{N} $ based on the observations of $(X_{i,j})_{1\le i<j\le N}$. They prove consistency and asymptotic normality for the maximum likelihood estimator. It is interesting to notice that this estimator sorts the players  in the same order as the scores $S_i$ (see \cite{For:1957}). In particular, the final winner is always the one with maximal estimated strength. \thmref{QuandAfails} shows that usually this player has also maximal strength when the merits are unbounded but it is not always true in the bounded case, see \thmref{SizeSetWinners}.

\end{itemize}

Throughout the article, $U$ denotes a copy of $U_1$ independent of $\mathbb{U}_1^N$, $Q$ denotes the tail distribution function and $\supp Q$ its support, $\P$ denotes the annealed probability of an event with respect to the randomness of $\mathbb{V}_{1}^{N}$ and $(X_{i,j})_{1\le i< j\le N}$, while $\P_V$ denotes the quenched probability measure given $\mathbb{V}_{1}^{N}$, that is $\Pr{\ \cdot\ |\mathbb{V}_{1}^{N}}$. In particular,
$$
\forall 1\le i<j\le N,\qquad \Prp{V}{X_{i,j}=1}=\frac{V_i}{V_i+V_j}\enspace.
$$
We are interested in the asymptotic probability that the "best" player wins, that is that the player $N$ with the largest strength $V_N$ ends up with the best score. The following annealed result gives conditions under which this probability is asymptotically $1$ when the number of players $N\to \infty$. 

\begin{theorem}\label{thm:QuandAfails}
 Assume that there exist $\beta\in(0,1/2)$ and $x_0>0$  in the interior of $\supp Q$ such that $Q^{1/2-\beta}$ is convex on $[x_0,\infty)$ and that $\Ex{U^2}<\infty$.
 Then,
\[\Pr{\text{the player $N$ wins}}\ge \Pr{S_N>\max_{1\leq i\leq N-1}S_i}\xrightarrow[N\to\infty]{}1\enspace.\]
\end{theorem}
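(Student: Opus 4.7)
My plan is to use the union bound $\P(S_N\le\max_{i<N}S_i)\le\sum_{i=1}^{N-1}\P(S_N\le S_i)$ combined with a quenched concentration inequality. Set $D_i := S_N-S_i = (2X_{N,i}-1)+\sum_{j\ne i,N}(X_{N,j}-X_{i,j})$; conditional on the environment $\mathbb{V}_1^N$ this is a sum of independent $\{-1,0,1\}$-valued random variables with mean
\[
\mu_i := \E_V[D_i] = (V_N-V_i)\Bigl(\frac{1}{V_N+V_i} + \sum_{j\ne i,N}\frac{V_j}{(V_N+V_j)(V_i+V_j)}\Bigr),
\]
so Hoeffding's inequality yields $\P_V(D_i\le 0)\le\exp(-c\mu_i^2/N)$ for an absolute constant $c>0$.

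To exploit this I need a lower bound on $\mu_i$. Using $V_j\le V_N$ in each denominator gives $\mu_i\ge (V_N-V_i)(4V_N^2)^{-1}\sum_{j\ne i,N}V_j$. The hypothesis $\E[U^2]<\infty$ implies, via the law of large numbers for $(V_j)$ and the Markov-type estimate $\E[V_N^2]\le N\E[U^2]$, that on an event $\Omega_N$ of annealed probability tending to $1$ one has simultaneously $N^{-1}\sum_jV_j\ge c_1$ and $V_N^2/N\le c_2$; on $\Omega_N$ therefore $\mu_i\ge c_3 N(V_N-V_i)/V_N^2$. The problem reduces to showing
\[
\sum_{k=1}^{N-1}\E\!\left[\exp\!\bigl(-c_4 N(V_N-V_{N-k+1})^2/V_N^4\bigr)\mathbf{1}_{\Omega_N}\right]\xrightarrow[N\to\infty]{}0.
\]

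The crux is to control the gaps $\Delta_k := V_N-V_{N-k+1}$, and this is where the convexity hypothesis enters. Using the quantile coupling $V_{N-k+1}=Q^{-1}(W_k)$ with $W_1<\cdots<W_N$ the order statistics of $N$ i.i.d. uniforms on $[0,1]$, the tangent inequality at $V_{N-k+1}$ for the convex function $f:=Q^{1/2-\beta}$ yields
\[
\Delta_k\ge \frac{W_k^{1/2-\beta}-W_1^{1/2-\beta}}{(1/2-\beta)\,W_k^{-1/2-\beta}|Q'(V_{N-k+1})|},
\]
while the differential form of the assumption (equivalently $H''\le(1/2-\beta)(H')^2$ for $H:=-\log Q$, whose first integral reads $Q/|Q'|\le Q(x_0)/|Q'(x_0)|+(1/2-\beta)(x-x_0)$) gives an upper bound on $|Q'(V_{N-k+1})|^{-1}$. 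For $k\le N/2$, combining these with the concentration $W_k\approx k/N$ produces a Pareto-type lower bound $\Delta_k\ge c_\beta V_N(k/N)^{a}$ for some $a<1$ which is sharp enough (thanks to $\beta>0$) to make the resulting exponents summable. For $k>N/2$, the much cruder estimate $\Delta_k\ge V_N-V_{\lceil N/2\rceil}$ together with the convergence in probability of $V_{\lceil N/2\rceil}$ to the median of $U$ suffices.

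The main technical obstacle will be making the convexity-to-gap step quantitative enough to survive taking annealed expectation. The role of $\beta>0$ is essential here: the Pareto exponent one gains from convexity must be strictly better than the borderline value at which the series $\sum_k\exp(-cN\Delta_k^2/V_N^4)$ would fail to vanish, mirroring the sharper dichotomy described in \thmref{SizeSetWinners}. A secondary subtlety is the restriction of convexity to $[x_0,\infty)$: this is handled by intersecting all of the above with the event $\{V_N\ge x_0\}$, which has probability tending to $1$ since $x_0$ lies in the interior of $\supp Q$.
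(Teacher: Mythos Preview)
Your union-bound strategy differs from the paper's route and could in principle succeed, but the convexity-to-gap step contains a genuine error. The ``first integral'' you quote goes the wrong way: from $H''\le(1/2-\beta)(H')^2$ one gets $(1/H')'\ge-(1/2-\beta)$, hence $Q/|Q'|\ge Q(x_0)/|Q'(x_0)|-(1/2-\beta)(x-x_0)$, a \emph{lower} bound that becomes negative and is useless. What convexity of $f=Q^{1/2-\beta}$ actually gives is that $|f'|$ is non-increasing, so $|f'(V_{N-k+1})|\le|f'(x_0)|$; inserted into your tangent inequality this yields only $\Delta_k\ge c_\beta\bigl(W_k^{1/2-\beta}-W_1^{1/2-\beta}\bigr)$, with \emph{no} factor of $V_N$.

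That missing $V_N$ kills the argument in the heavy-tailed unbounded regime. Take $Q(x)=x^{-p}$ on $[1,\infty)$ with $2<p\le4$: all hypotheses hold for any $\beta\in(0,1/2)$, yet $V_N$ is of order $N^{1/p}$ while the bound above gives $\Delta_1$ only of order $N^{-(1/2-\beta)}$; the exponent $N\Delta_1^2/V_N^4$ is then of order $N^{2\beta-4/p}$, which fails to diverge for every $\beta<1/2$ when $p\le4$. The true gap is of order $V_N$ here, but extracting this requires combining convexity with the moment condition $\E[U^2]<\infty$ (via a H\"older argument), not integrating a differential inequality. The paper sidesteps the need for gap estimates at every $k$ by a different architecture: it concentrates $Z_N=\max_{i<N}S_i$ around its mean with the bounded-difference inequality, bounds $\E_V[Z_N]$ through a Pisier-type maximal inequality in which every $S_i$ is compared to the score of a virtual player of strength $V_{N-1}$, and thereby reduces the whole problem to the \emph{single} gap $V_N-V_{N-1}$, for which it establishes $V_N-V_{N-1}\ge V_N(V_{N-1}/\sqrt N)^{1-\beta}$ with high probability.
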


\noindent

\begin{remark}
 When the support of the distribution of $U$ is $\R_+$, the convexity condition is not very restrictive as it is satisfied by standard continuous distributions with tails function $Q(x)\simeq e^{-x^a}$, $Q(x)\simeq x^{-b}$ or $Q(x)\simeq (\log x)^{-c}$. 
%
The moment condition $\Ex{U^2}<\infty$ is more restrictive in this context but still allows for natural distributions of the merits as exponential, exponential of Gaussian or positive parts of Gaussian ones. 
It provides control of the explosion of maximal strengths. 
It is likely that it can be improved, but it is a technical convenience allowing to avoid a lot of tedious computations.

 When $\supp Q$ is finite, we can always assume by homogeneity that it is included in $[0,1]$, since the distribution of $(X_{i,j})_{1\le i<j\le N}$ given $\mathbb{V}_{1}^{N}$ is not modified if all $V_i$ are multiplied by the same real number $\lambda$. The moment condition is always satisfied and the only condition is the convexity one. This last condition forbids an accumulation of good players with strength close to 1. 
\end{remark}

Let us investigate now the necessity of the convexity condition. For this purpose, suppose that $Q(1-u)\sim u^{\alpha}$ when $u\to 0$, then the convexity condition holds iff $\alpha >2$. To check the tightness of the bound $2$, we introduce the following condition.
\begin{Assumption}
The maximum of $\supp Q$ is $1$ and there exists $\alpha\in [0,2)$ such that, 
\begin{equation}\label{eq:AssumptionA}
\hyptag{A}\log Q(1-u)= \alpha\log(u) +o(\log u)\quad\text{when $u\to 0$}.
\end{equation}
\end{Assumption}
Let us stress here that $Q$ may satisfy \eqref{eq:AssumptionA} even if it is not continuous. Moreover, $\alpha$ is allowed to be equal to $0$, in particular, $Q(1)$ may be positive. Notice that some standard  distributions satisfy Assumption \eqref{eq:AssumptionA}, for example the uniform distribution satisfies \eqref{eq:AssumptionA} with $\alpha=1$, the Arcsine distribution satisfies it with $\alpha=1/2$ and any Beta distribution $B(a,b)$ satisfies it as long as the parameter $b<2$ with $\alpha =b$.

The next quenched result studies, under Assumption \eqref{eq:AssumptionA}, the size of the set of possible winners.
\begin{theorem}\label{thm:SizeSetWinners}
For any $r\in\R_+$, let $G_{r}=\set{\lceil N-r\rceil+1,\ldots, N}$ 
denote the set of the $\lfloor  r\rfloor$ best players.
If \eqref{eq:AssumptionA} holds, for any $\gamma<1-\alpha/2$ then, $\P$ almost-surely, 
\[\Prp{V}{\mbox{none of the }N^\gamma\mbox{ best players wins}}=\Prp{V}{\max_{i\in  G_{N^{\gamma}}}S_i< \max_{i\in G_N}S_i}\to 1\enspace.\]
For any $\gamma>1-\alpha/2$ then, $\P$ almost-surely, 
\[\Prp{V}{\mbox{one of the }N^\gamma\mbox{ best players wins}}=\Prp{V}{\max_{i\notin  G_{N^\gamma}}S_i< \max_{i\in G_N}S_i}\to 1\enspace.\]
\end{theorem}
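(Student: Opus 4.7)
My plan is to decompose each score as $S_i = \mu_i + W_i$, where $\mu_i \egaldef \sum_{j\neq i} V_i/(V_i+V_j)$ is the quenched expectation and $W_i \egaldef S_i - \mu_i$ is centred. Conditionally on $\mathbb{V}_1^N$, each $W_i$ is a sum of $N-1$ independent bounded centred Bernoulli increments. The strong law applied to the symmetric sum $\sum_j U_j/(1+U_j)^2$ together with a first-order Taylor expansion in $\epsilon_i\egaldef 1-V_i$ yields, for top players, $\mu_N-\mu_i = C_1 N \epsilon_i(1+o(1))$ and $\sigma_i^2\egaldef \Var_V(W_i) = C_1 N (1+o(1))$, with $C_1\egaldef \Ex{U/(1+U)^2}$. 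The pairwise covariance $\cov_V(W_i,W_j)=-V_iV_j/(V_i+V_j)^2$ is bounded by $1/4$, negligible relative to $\sigma_i^2$. By Assumption~\eqref{eq:AssumptionA}, $1-U$ has distribution function $u\mapsto u^{\alpha+o(1)}$ near $0$, so Glivenko--Cantelli applied to the empirical distribution of $(1-U_i)$ produces, for any fixed $\delta>0$ and $\P$-a.s.~for all large $N$, the quenched estimate $c_-(k/N)^{1/(\alpha-\delta)}\le \epsilon_{\text{rank }k}\le c_+(k/N)^{1/(\alpha+\delta)}$ uniformly over the relevant range of~$k$.

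For Part~2 ($\gamma>1-\alpha/2$), the exponent $\beta\egaldef 1-(1-\gamma)/\alpha$ exceeds $1/2$; set $\eta\egaldef (\beta-1/2)/2>0$. The order-statistics bound gives $\mu_N-\mu_i\ge N^{1/2+\eta}$ for every $i\notin G_{N^\gamma}$, $\P$-a.s.~for all large $N$. Since $S_i - S_N$ is a sum of $N-1$ independent centred bounded increments, Hoeffding's inequality yields $\Prp{V}{S_i\ge S_N}\le \exp(-cN^{2\eta})$; a union bound over the $N$ players then gives $\Prp{V}{S_N>\max_{i\notin G_{N^\gamma}}S_i}\to 1$, whence the winner lies in $G_{N^\gamma}$.

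For Part~1 ($\gamma<1-\alpha/2$), fix $\gamma'\in(\gamma,1-\alpha/2)$ and small $\eta>0$ with $(1+\eta)\gamma<\gamma'$. Bernstein's inequality with variance proxy $NC_1$, a union bound, and the above control of the means yield a threshold $\tau=\mu_N+\sqrt{2(1+\eta)C_1\gamma N\log N}+o(\sqrt N)$ such that $\Prp{V}{\max_{i\in G_{N^\gamma}}S_i>\tau}\to 0$. To produce a matching lower bound on $\max_{i\in G_{N^{\gamma'}}\setminus G_{N^\gamma}}S_i$, I would apply the Paley--Zygmund second-moment method to $Y\egaldef \sum_{i\in G_{N^{\gamma'}}\setminus G_{N^\gamma}}\mathbf{1}\{S_i>\tau\}$: a moderate-deviation estimate for the Bernoulli sum $W_i$, sharper than Berry--Esseen at the threshold located $\sqrt{2(1+\eta)\gamma\log N}$ standard deviations into the tail, gives $\Prp{V}{S_i>\tau}\ge N^{-(1+\eta)\gamma-o(1)}$, so $\Exp{V}{Y}\ge N^{\gamma'-(1+\eta)\gamma-o(1)}\to\infty$. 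Conditioning on the single shared match $X_{i,j}$ decouples the remaining $N-2$ increments of $S_i$ and $S_j$, so $\Prp{V}{S_i>\tau,S_j>\tau}=\Prp{V}{S_i>\tau}\Prp{V}{S_j>\tau}(1+o(1))$; hence $\Var_V(Y)=o(\Exp{V}{Y}^2)$ and Paley--Zygmund gives $\Prp{V}{Y\ge 1}\to 1$.

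The main obstacle is the anti-concentration step in Part~1: concentration inequalities supply only upper bounds, whereas the matching lower bound requires a precise moderate-deviation estimate for the environment-dependent, non-identically-distributed Bernoulli sum $W_i$, at a threshold on the order of $\sqrt{\log N}$ standard deviations. Propagating every environment-dependent estimate $\P$-almost surely rather than in $\P$-probability is another source of technicality, routinely handled via Borel--Cantelli applied to the polynomial rates that appear throughout.
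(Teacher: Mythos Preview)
Your strategy is sound and would go through, but it diverges from the paper's in how the crucial lower bound on $Z_N=\max_i S_i$ is obtained. The paper first concentrates $Z_N$ about its quenched expectation via a bounded-difference inequality (decomposing the matches into $N-1$ round-robin ``rounds'', each affecting $Z_N$ by at most $1$), and then shows
\[
\Exp{V}{Z_N}=N\,\Ex{\tfrac1{1+U}}+\sqrt{(2-\alpha)\vartheta_U N\log N}+o\bigl(\sqrt{N\log N}\bigr).
\]
The lower bound on this expectation comes from a decoupling lemma: $\Prp{V}{Z_N\le a}\le\prod_i\Prp{V}{S_i\le a}$, proved by a short induction that replaces one shared Bernoulli $X_{i,j}$ at a time by an independent copy. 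This reduces the problem to a single player's moderate deviation, handled by computing the Laplace transform of $S$ and invoking a G\"artner--Ellis--type large deviation lower bound. Your second-moment route is a legitimate alternative, and in fact your correlation estimate can be sharpened: conditioning on $X_{i,j}$ and writing the covariance as $p_0p_1(a_0-a_1)(b_0-b_1)$ shows $\cov_V(\mathbf 1_{S_i>\tau},\mathbf 1_{S_j>\tau})\le 0$ exactly (not just $1+o(1)$), which is precisely the pairwise case of the paper's decoupling lemma and makes Paley--Zygmund immediate once $\Exp{V}{Y}\to\infty$.

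Two further remarks. First, the ``main obstacle'' you flag---a moderate-deviation lower bound for $W_i$ at $\sqrt{c\log N}$ standard deviations---is not avoided by the paper; it is exactly what the G\"artner--Ellis argument supplies, so you may cite the same machinery. Second, your Part~2 argument (Hoeffding on $S_i-S_N$ plus a union bound) is actually simpler than what the paper does there: the paper routes Part~2 through the full expectation estimate for $Z_N$ as well, whereas you only need the single reference player $N$. Finally, Glivenko--Cantelli alone is too coarse for the order-statistics bounds you need in the regime $k=N^\gamma$, $\gamma<1$ (the quantile level $k/N\to0$); the paper gets the required control on $|I_N^\epsilon|=\#\{i:V_i>1-N^{-1/2+\epsilon}\}$ directly from Assumption~\eqref{eq:AssumptionA} via Borel--Cantelli, and you should do the same.
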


\begin{remark} The first part of the theorem shows that, when $Q(1-u)\sim u^{\alpha}$, with $\alpha<2$, then none of $N^{\gamma}$ "best" players, for any $\gamma\in (0,1-\alpha/2)$ wins the competition. 
In particular, the "best" one does not either.  In this sense, the bound $2$ in the asymptotic development of $Q$ around $1$ is tight.  

%

The second result in \thmref{SizeSetWinners} shows the sharpness of the bound $1-\alpha/2$ in the first result.
Heuristically, this theorem shows that under Assumption \eqref{eq:AssumptionA}, $N^{1-\alpha/2}$ players  can be champion. 
\end{remark}

Under Assumption~\eqref{eq:AssumptionA}, the best player  does not win the championship. Therefore, we may wonder what strength $v_{N+1}$ an additional tagged player $N+1$ should have to win the competition against players distributed according to $Q$. The following quenched result discusses the asymptotic probability that player $N+1$  wins depending on its strength $v_{N+1}$. To maintain consistency with the previous results, we still use the notations $$S_i=\sum_{j=1,j\ne i}^NX_{i,j}\quad \text{ for }i\in\set{1,\ldots,N+1}\enspace.$$ With this convention, $S_{N+1}$ describes the score of the player $N+1$ and the score of each player $i\in\set{1,\ldots, N}$ is equal to $S_i+X_{i,N+1}$.

\begin{theorem}\label{thm:Cutoff}
Assume \eqref{eq:AssumptionA} and let 
$$\vartheta_U=\Ex{\frac{U}{(U+1)^2}}\quad\text{ and }\quad\epsilon_N=\sqrt{\frac{2-\alpha}{\vartheta_U}\frac{\log N}{N}}\enspace.$$ 

If $\liminf_{N\to \infty}\frac{v_{N+1}-1}{\epsilon_N}>1$, then, $\P$-almost surely
 \[\Prp{V}{\mbox{player }N+1\mbox{ wins}}\geq\Prp{V}{S_{N+1}>1+\max_{i=1,\ldots,N}S_i}\to 1\enspace.\]\\
 If $\limsup_{N\to \infty}\frac{v_{N+1}-1}{\epsilon_N}<1$, then, $\P$-almost surely
 \[\Prp{V}{\mbox{player }N+1\mbox{ does not win}}\geq\Prp{V}{S_{N+1}<\max_{i=1,\ldots,N}S_i}\to1\enspace.\]
\end{theorem}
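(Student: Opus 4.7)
The key observation is that, conditional on $\mathbb{V}_1^N$, the matches involving player $N+1$ are disjoint from those among players $1,\ldots,N$. Hence $S_{N+1}$ is $\P_V$-independent of $M_N := \max_{1 \le i \le N} S_i$, and the plan is to analyze the distributions of $S_{N+1}$ and $M_N$ separately, then compare them.

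Setting $\eta = v_{N+1}-1$ (which tends to $0$ at rate $\epsilon_N$), a Taylor expansion gives
\[
\frac{v_{N+1}}{v_{N+1}+V_i} = \frac{1}{1+V_i} + \eta\,\frac{V_i}{(1+V_i)^2} + O(\eta^2),
\]
and since $(V_i)$ is merely a permutation of the i.i.d.\ sample $(U_i)$, the strong law of large numbers yields, almost surely,
\[
\E[S_{N+1}\mid V] = N\mu_0 + \eta\, N \vartheta_U\, (1+o(1)),\qquad \mu_0 := \E\!\left[\tfrac{1}{1+U}\right],
\]
the error term $O(N\eta^2) = O(\log N)$ being negligible compared with $\eta N\vartheta_U \sim \sqrt{(2-\alpha)\vartheta_U N\log N}$. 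Conditionally on $V$, $S_{N+1}$ is a sum of $N$ independent Bernoullis, so Bernstein's inequality gives $|S_{N+1}-\E[S_{N+1}\mid V]| = o(\sqrt{N\log N})$ with $\P_V$-probability tending to $1$, almost surely.

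The heart of the argument is the sharp asymptotic
\[
M_N = N\mu_0 + (1+o(1))\, \epsilon_N N \vartheta_U \quad\text{with }\P_V\text{-probability }\to 1,\text{ almost surely.}
\]
The upper bound refines arguments used for \thmref{SizeSetWinners}: for a player with $V_i = 1-\delta_i$ one has $\E[S_i\mid V] \approx N\mu_0 - \delta_i N \vartheta_U - \tfrac{1}{2}$, and Bernstein yields $\P_V(S_i \ge N\mu_0 + (1+\delta)^{1/2}\epsilon_N N\vartheta_U) \le N^{-(1+\delta)(2-\alpha)/2 + o(1)}$; summing over the top $N^{1-\alpha/2+\delta/2}$ players and discarding weaker ones (whose contribution is exponentially small) closes this direction. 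For the lower bound, one selects the top $k = N^{1-\alpha/2-\delta}$ players whose strengths, by \eqref{eq:AssumptionA}, satisfy $1-V_i \le N^{-1/2-\delta/\alpha + o(1)}$, so that $\E[S_i\mid V] = N\mu_0 + o(\epsilon_N N\vartheta_U)$ uniformly in $i \in G_k$. Setting $T_i = \mathbf{1}\{S_i \ge N\mu_0 + (1-\delta')^{1/2}\epsilon_N N\vartheta_U\}$ for any $\delta' > 0$, a moderate-deviations (Cramer-type) estimate for the Bernoulli sum $S_i$ yields $\P_V(T_i=1) = N^{-(1-\delta')(2-\alpha)/2+o(1)}$. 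Choosing $\delta$ small enough relative to $\delta'$, one has $\E[\sum_{i\in G_k} T_i \mid V] \to \infty$. Since any two $S_i, S_j$ share only the single match $X_{i,j}$, $\cov(T_i,T_j\mid V) = O(1/N)$, and a second-moment/Chebyshev argument then forces $\sum_{i\in G_k} T_i \ge 1$ with $\P_V$-probability $\to 1$.

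To conclude, if $\liminf_N (v_{N+1}-1)/\epsilon_N > 1$, pick $c > 1$ with $v_{N+1}-1 > c\,\epsilon_N$ for $N$ large and $\delta>0$ with $c > (1+\delta)^{1/2}$; the concentration of $S_{N+1}$ combined with the upper bound on $M_N$ yields $S_{N+1} - M_N > 1$ with $\P_V$-probability $\to 1$, so player $N+1$ wins. Symmetrically, if $\limsup_N (v_{N+1}-1)/\epsilon_N < 1$, pick $c < 1$ with $v_{N+1}-1 < c\,\epsilon_N$ eventually and $\delta'>0$ with $c < (1-\delta')^{1/2}$; the lower bound on $M_N$ then forces $S_{N+1} < M_N$ with $\P_V$-probability $\to 1$. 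The main obstacle is the lower bound on $M_N$: it requires controlling the weak pairwise correlations among the $S_i$'s via a second-moment computation, together with moderate-deviations estimates beyond the reach of Berry-Esseen in the relevant regime.
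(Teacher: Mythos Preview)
Your overall strategy matches the paper's: concentrate $S_{N+1}$ around $N\mu_0 + (v_{N+1}-1)N\vartheta_U$, establish the sharp asymptotic $M_N = N\mu_0 + (1+o(1))\sqrt{(2-\alpha)\vartheta_U N\log N}$, and compare. The treatment of $S_{N+1}$ and the upper bound on $M_N$ are essentially the paper's arguments (the paper phrases the latter through Pisier's exponential-moment trick and the bounded-difference concentration of $Z_N$ around $\E_V[Z_N]$, but this is equivalent to your per-player Bernstein plus union bound).

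The genuine difference is the \emph{lower bound} on $M_N$. The paper does not use a second-moment argument; instead it proves a clean decoupling lemma (\lemref{tech}): $\P_V(Z_N\le a)\le \prod_{i=1}^N \P_V(S_i\le a)$, i.e.\ $Z_N$ stochastically dominates the maximum of independent copies of the $S_i$. This reduces the problem to independent scores, after which a G\"artner--Ellis moderate-deviations lower tail bound for a single $S_i$ suffices. Your route is more hands-on and also works, but your stated reason for the covariance bound is not quite right: ``sharing only one match'' does not by itself give $\cov_V(T_i,T_j)=O(1/N)$. What actually happens is stronger and is what makes your argument go through: writing $S_i=X_{i,j}+R_i$, $S_j=(1-X_{i,j})+R_j$ with $R_i,R_j,X_{i,j}$ mutually $\P_V$-independent, a short computation gives $\cov_V(T_i,T_j)=-q(1-q)(p_i(1)-p_i(0))(p_j(1)-p_j(0))\le 0$, so the $T_i$ are pairwise negatively correlated and $\Var_V(\sum T_i)\le \E_V[\sum T_i]$, whence Chebyshev concludes. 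The paper's decoupling lemma is essentially the global version of this negative-dependence observation and is arguably cleaner; your version is more elementary but needs the covariance sign, not merely its size.

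One minor point: your Taylor expansion of $\E_V[S_{N+1}]$ presupposes $v_{N+1}\to 1$. The paper handles the remaining case (where along a subsequence $v_{N+1}\ge 1+\delta$, resp.\ $v_{N+1}\le 1-\delta$) by a short monotonicity/contradiction argument; you should add the same one-line reduction, since the hypotheses $\liminf>1$ or $\limsup<1$ do not force $v_{N+1}\to 1$.
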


\begin{remark}
This result shows a cut-off phenomenon around $1+\epsilon_N$ for the asymptotic probability that player $N+1$ wins. 

It is interesting to notice that, for a given $\alpha$, $\epsilon_N$ is a non increasing function of $\vartheta_U$. Therefore, when $U$ is stochastically dominated by $U'$, that is  $\P(U\geq a)\leq \P(U'\geq a)$ for any $a\in[0,1]$, 
we have $\vartheta_U\le \vartheta_{U'}$, hence 
 $\epsilon_N^U\ge \epsilon_N^{U'}$. In other words, it is easier for the tagged player to win against opponents distributed as $U'$ than as $U$ even if the latter has a weaker mean than the former. This result may seem counter-intuitive at first sight.  In the following example in particular, it is easier for the additional player to win the competition in case 1 than in case 2, since both distributions satisfy \eqref{eq:AssumptionA} with $\alpha=0$.
\begin{enumerate}
 \item All players in $\set{1,\ldots,N}$ have strength $1$.
 \item The players in $\set{1,\ldots,N}$ have strength $1$ with probability $1/2$ and strength $1/2$ with probability $1/2$.
\end{enumerate}
Actually the score of the tagged player is smaller when he faces stronger opponents as expected, but so is the best score of the other good players. 
\end{remark}

Remark that the first theorem is an annealed result while the others are quenched. Indeed,  the first theorem requires to control precisely the difference of strengths between the best player and the others when all the players are identically distributed, this seems complicated in the quenched case. This problem does not appear in the other results: for example, in Theorem 
\ref{thm:Cutoff}, the  strength of the tagged player is deterministic and the strengths of others are bounded by 1.

\medskip


The remaining of the paper presents the proofs of the main results. Section~\ref{sec:TheBestWinsUsually} gives the proof of \thmref{QuandAfails} and Section~\ref{sec:ButNotAlways} the one of Theorems~\ref{thm:SizeSetWinners} and \ref{thm:Cutoff}.

%
%
%


\section{Proof of \thmref{QuandAfails}}\label{sec:TheBestWinsUsually}
%
%
%
%
%

Denote by $Z_{N}=\max_{i\in\set{1,\ldots,N-1}}S_i$. The key to our approach is to build random bounds $s^N$ and $z^N$ depending only on $\mathbb{V}_{1}^{N}$ such that, 
\begin{equation}\label{eq:ConSZ1}
\Pr{S_{N}\ge s^N}\to 1,\quad \Pr{Z_N\le  z^N}\to 1\quad \text{and}\quad \Pr{s^N>z^N}\to 1\enspace.
\end{equation}
It follows that, 
\begin{multline*}
\Pr{S_{N}>Z_N}\ge \Pr{S_{N}\ge s^N,\; Z_N\le z^N,\;s^N>z^N}\\
\quad\ge 1-\Pr{S_{N}< s^N}-\Pr{ Z_N> z^N}-\Pr{s^N<z^N}\to 1\enspace. 
\end{multline*}
The construction of $s^N$ and $z^N$ is the subject of the next subsection, it is obtained thanks to concentration inequalities. The concentration of $S_{N}$ is easy, the tricky part is to build $z^N$. First, we use the bounded difference inequality to concentrate $Z_N$ around its expectation. The upper bound on its expectation is given by the sum of the expected score of player $N-1$ and a deviation term that is controlled based on an argument used by Pisier \cite{Pisier:1983}. Finally, the control of $\Pr{s^N>z^N}$ derives from an analysis of the asymptotics of $V_{N-1}$ and $V_N$.

  \subsection{Construction of $s^N$ and $z^N$}
 The expectation of the score $S_N$ of the best player is given by
 \[\Exp{V}{S_N}=\sum_{i=1}^{N-1}\frac{V_{N}}{V_{N}+V_i}\]
 and the concentration of $S_N$ is given by Hoeffding's inequality, see \cite{Hoeffding:1963}:
\[
	\Prp{V}{S_N\le \sum_{i=1}^{N-1}\frac{V_{N}}{V_{N}+V_i}-\sqrt{\frac{Nu}2}}\le e^{-u}\enspace.
\]
Hence, the first part of \eqref{eq:ConSZ1} holds for any $u_N\to\infty$ with
\begin{equation}\label{borneinfS}
s^N=\sum_{i=1}^{N-1}\frac{V_{N}}{V_{N}+V_i}-\sqrt{Nu_N}\enspace. 
\end{equation}
The following lemma implies the concentration of $Z_N$ around its expectation. As we will use it in a different context in other proofs, we give a slightly more general result.
\begin{lemma}\label{lem:ConcMax}
Let $I\subset [N]$ and let $Z=\max_{i\in I}S_i$. For any $u>0$,
 \begin{equation*}
 \Prp{V}{Z\geq \Exp{V}{Z}+\sqrt{\frac{N}2u}}\leq e^{-u}
 \end{equation*}
 and
\begin{equation*}
 \Prp{V}{Z\leq \Exp{V}{Z}-\sqrt{\frac{N}2u}}\leq e^{-u}\enspace.
 \end{equation*}
\end{lemma}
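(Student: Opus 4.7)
The plan is to observe that, conditionally on $\mathbb{V}_1^N$, the quantity $Z = \max_{i\in I} S_i$ is a function of the $\binom{N}{2}$ independent Bernoulli variables $Y_{ij} := X_{i,j}$ (for $1\le i<j\le N$), and then to apply a sharp concentration inequality for functions of independent bounded random variables.

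For each $i \in I$, using $X_{j,i} = 1 - X_{i,j}$ when $i<j$, I would rewrite $S_i = \psh{a_i}{Y} + c_i$ as an affine function of $Y = (Y_{ij})_{i<j} \in [0,1]^{\binom{N}{2}}$, where $a_i \in \{-1,0,+1\}^{\binom{N}{2}}$ has exactly $N-1$ nonzero entries (those indexed by matches involving player $i$), so that $\norm{a_i}_2 = \sqrt{N-1}$. Consequently, $Z = \max_{i\in I}(\psh{a_i}{Y}+c_i)$ is convex in $Y$ (a pointwise maximum of affine functions) and $\sqrt{N-1}$-Lipschitz with respect to the Euclidean norm on $[0,1]^{\binom{N}{2}}$.

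I would then invoke Talagrand's concentration inequality for convex Lipschitz functions of independent bounded random variables (equivalently, a Bousquet-type deviation inequality for the supremum of an empirical process). This yields two-sided concentration of $Z$ about $\Exp{V}{Z}$ on the $\sqrt{N}$ scale, i.e.\ a bound of the form $\Prp{V}{\absj{Z - \Exp{V}{Z}} \ge t} \le C\exp(-ct^2/N)$. Setting $t = \sqrt{Nu/2}$ reproduces the stated upper- and lower-tail inequalities, at least up to universal constants; matching the precise constants in the lemma requires a careful choice of the underlying inequality.

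The main obstacle is securing this $\sqrt{N}$ deviation scale. A direct application of McDiarmid's bounded differences inequality to the $\binom{N}{2}$ match outcomes — each of which changes $Z$ by at most one — yields only $\exp(-ct^2/N^2)$, which is too weak by a factor of $N$. A union-bound-plus-Hoeffding strategy applied to individual scores $S_i$ would instead introduce a spurious $\log|I|$ term in the deviation. The improvement to scale $\sqrt{N}$ comes from the observation that at any configuration $Y$ only the $N-1$ matches involving the index $i^\ast \in I$ currently maximizing $S_i$ actually affect $Z$; this self-bounded structure is precisely what is captured by the convex-plus-Euclidean-Lipschitz hypothesis.
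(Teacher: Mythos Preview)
Your approach is correct in that it recovers the essential $\sqrt{N}$ deviation scale, but it is genuinely different from the paper's argument. The paper does \emph{not} appeal to Talagrand or to any convex-Lipschitz machinery; instead it salvages the naive McDiarmid route by a combinatorial regrouping. Using the round-robin schedule, the $\binom{N}{2}$ match outcomes are partitioned into $N-1$ ``rounds'' $Z^1,\ldots,Z^{N-1}$ (adding a ghost player if $N$ is odd), where in round $k$ every player plays exactly once. Since flipping all outcomes in a single round changes each $S_i$ by at most $1$, it changes $Z=\max_{i\in I}S_i$ by at most $1$. One then applies the bounded differences inequality to these $N-1$ block variables, yielding precisely $\Prp{V}{|Z-\Exp{V}{Z}|\ge\sqrt{Nu/2}}\le e^{-u}$ with the stated constants.

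The trade-offs: the paper's argument is elementary (only McDiarmid is needed) and delivers the exact constants in the lemma directly, at the price of the round-robin trick which one has to think of. Your route via convex-Lipschitz concentration is more conceptual and explains \emph{why} the scale is $\sqrt{N}$ (only $N-1$ coordinates are active at the argmax), but it imports heavier tools and, as you note, typically produces constants that do not match those stated; moreover, securing the lower tail for a convex function requires some care (Talagrand's convex-distance inequality or a separate argument), whereas McDiarmid gives both tails symmetrically for free.
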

\begin{proof}
The proof is based on the bounded difference inequality recalled in \thmref{bdi} of the appendix (see \cite{Dembo:1997}, \cite{Marton:1996} or \cite{Massart:2007}). To apply this result, we have to decompose properly the set of independent random variables $(X_{i,j})_{1\le i<j\le N}$. To do so, we use the round-robin algorithm which we briefly recall.

First, suppose $N$ even. Denote by $\s$ the permutation on $\{1,\dots,N\}$ such that $\s(1)=1,\ \s(N)=2\ \text{and}\ \s(i)=i+1,\ \text{if } 1< i< N$ and define the application 
  $$A: \{1,\dots,N-1\}\times \{1,\dots,N\}\to \{1,\dots,N\}$$
  by $A(k,i)=\s^{-(k-1)}(N+1-\s^{(k-1)}(i))$. Then, for any $k\in\{1,\dots,N-1\}$, $A(k,\cdot)$ is an involution with no fixed point and for any $i\in\{1,\dots,N\}$, $A(\cdot,i)$ is a bijection from  $\{1,\dots,N-1\}$ to $\{1,\dots,N\}\setminus\{i\}$. The first variable of function $A$ has to be thought as ``steps'' in the tournament. At each step, every competitor plays exactly one match and $A(k,i)$ represents here the opponent of player $i$ during the $k^\text{th}$ step. We denote by $Z^k$ the variables describing the results of the $k^\text{th}$ step, that is $Z^k=(X_{i,A(k,i)},\ i<A(k,i))$. The variable $Z$ can be expressed as a (measurable) function of the $Z^k$, $Z=\Psi(Z^1,\ldots,Z^{N-1})$. Moreover, for any $k=1,\ldots,N-1$ and any $z^1,\ldots,z^{N-1},\tilde{z}^{k}$ in $\set{0,1}^{N/2}$, the differences
 \[\absj{\Psi(z^1,\ldots,z^{k},\ldots,z^{N-1})-\Psi(z^1,\ldots,\tilde{z}^{k},\ldots,z^{N-1})}\]
are bounded by $1$. If $N$ is odd, we only have to add a ghost player and $Z$ can be  expressed in the same way as a measurable function of $N$ independent random variables with differences bounded by $1$.
Therefore, in both cases, the bounded difference inequality applies and gives the result.
\end{proof}
It remains to compare the expectations of $Z_N$ and of $S_N$. This requires to control the sizes of $V_{N-1}$  and $V_N$. Recall that $V_i$ is the $i^{\text{th}}$ order statistics of the vector $\mathbb{U}_1^N=\left(U_1,\ldots,U_N\right)$, that is
$$V_i=\min_{k\in\{1,\dots,N\}}\set{U_k\ \big|\ \exists I\subset \{1,\dots,N\},\ |I|=i \text{ and } \forall l\in I, U_l\leq U_k}\enspace .$$
Then the sets $\set{V_1,\ldots,V_N}$ and $\set{U_1,\ldots,U_N}$, counted with multiplicity, are equal which guarantees that, for any function $f$, $\sum_{i=1}^Nf(V_i)=\sum_{i=1}^Nf(U_i)$.

Let $Q^{-1}$ denote the generalized inverse of $Q$: for $y\in(0,1)$,
$$
Q^{-1}(y)=\inf\set{x\in\R_+^*,\ Q(x)\leq y}\enspace.
$$
Remark that the convexity assumption implies that, if $M$ is the supremum of $\supp Q$, the function $Q$ is a continuous bijection from $[x_0,M)$ to $(0,Q(x_0)]$ such that $\lim_{x\to M}Q(x)=0$ so,  on $(0,Q(x_0)]$, $Q^{-1}$ is the true inverse of $Q$. 
\begin{lemma}\label{lem:bornesvn}
For any function $h$ defined on $\R_+$ such that $\lim_{+\infty}h=+\infty$, let
\begin{align*}
 a^h_N&=\left\{
\begin{array}{ll}
 Q^{-1}\paren{\frac{h(N)}N}&\;\mbox{if}\;\frac{h(N)}N<1\\
 0&\;\mbox{ otherwise}
\end{array}\right.\\
 b^h_N&=Q^{-1}\paren{\frac{1}{N h(N)}}\\
 A^h_N&=\set{a_N^h\leq V_{N-1}\leq V_{N}\leq b^h_N}\enspace. 
\end{align*}
Then $\displaystyle\lim_{N\to\infty}\Pr{A^h_N}=1$.
\end{lemma}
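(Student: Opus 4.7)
The plan is to reduce $\Pr{A_N^h}\to 1$ to two separate tail estimates, $\Pr{V_N>b_N^h}\to 0$ and $\Pr{V_{N-1}<a_N^h}\to 0$, via a union bound on the complement (the second event being automatically empty when $h(N)/N\ge 1$, i.e.\ when $a_N^h=0$). The argument uses only the definition of $Q^{-1}$ as an infimum together with monotonicity and one-sided continuity of the tail function $Q$; in particular I expect no continuity or moment assumption on $U$ to be needed.

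For the upper bound on $V_N$, since $V_N=\max_i U_i$ the union bound gives $\Pr{V_N>b_N^h}\le N Q(b_N^h)$. Because $Q$ is right-continuous and the set $\{x:Q(x)\le 1/(Nh(N))\}$ is an upward-closed half-line, letting $x$ decrease to its infimum $b_N^h$ yields $Q(b_N^h)\le 1/(Nh(N))$, hence $\Pr{V_N>b_N^h}\le 1/h(N)\to 0$.

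For the lower bound on $V_{N-1}$, the crucial step is to translate the event into a binomial tail. Since $V_{N-1}$ is the second largest of the $U_i$'s, $\{V_{N-1}<a_N^h\}$ coincides with $\{B\le 1\}$ where $B=|\{i\le N:U_i\ge a_N^h\}|$ follows $\mathrm{Bin}(N,\tilde q)$ with $\tilde q=\Pr{U\ge a_N^h}$. The main subtlety, and the only real obstacle I anticipate, is to establish the lower bound $\tilde q\ge h(N)/N$: the definition of $a_N^h$ controls the \emph{strict} tail $Q(a_N^h)=\Pr{U>a_N^h}$ from above by $h(N)/N$, which is the wrong direction. The fix is to pass to the left limit: for every $x<a_N^h$ one has $Q(x)>h(N)/N$, and letting $x\uparrow a_N^h$ gives $\Pr{U\ge a_N^h}=\lim_{x\uparrow a_N^h}Q(x)\ge h(N)/N$, since $\{U>x\}\uparrow\{U\ge a_N^h\}$.

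Once $N\tilde q\ge h(N)\to\infty$ is in hand, a short computation shows $\Pr{B\le 1}=(1-\tilde q)^N+N\tilde q(1-\tilde q)^{N-1}\to 0$: the first term is at most $e^{-N\tilde q}\le e^{-h(N)}$, and on the range $[1/N,1]$ the function $q\mapsto Nq(1-q)^{N-1}$ is non-increasing (derivative proportional to $1-Nq$), so its value at $\tilde q\ge h(N)/N$ is bounded by $h(N)(1-h(N)/N)^{N-1}\le h(N)\,e^{-(N-1)h(N)/N}\to 0$. Combining the two estimates via the union bound gives $\Pr{A_N^h}\to 1$.
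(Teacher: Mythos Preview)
Your argument is correct and follows the same decomposition as the paper: bound $\Pr{V_N>b_N^h}$ and $\Pr{V_{N-1}<a_N^h}$ separately, the latter via the binomial identity $\Pr{B\le 1}=(1-\tilde q)^N+N\tilde q(1-\tilde q)^{N-1}$. One small slip: as $x\uparrow a_N^h$ the sets $\{U>x\}$ \emph{decrease} to $\{U\ge a_N^h\}$, not increase; continuity from above still gives $\lim_{x\uparrow a_N^h}Q(x)=\Pr{U\ge a_N^h}$, so the conclusion stands.

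The difference from the paper lies in how you handle $Q^{-1}$. The paper's proof sits inside Section~\ref{sec:TheBestWinsUsually} and tacitly uses the convexity assumption of \thmref{QuandAfails}, which makes $Q^{-1}$ a genuine inverse on $(0,Q(x_0)]$ and hence $Q(Q^{-1}(y))=y$; this yields the exact formulas $\Pr{V_N>b_N^h}=1-(1-\tfrac{1}{Nh(N)})^N$ and the bound $Q(a_N^h)\ge \tfrac{h(N)}{N}\wedge Q(x_0)$. Your left-limit argument, $\Pr{U\ge a_N^h}=\lim_{x\uparrow a_N^h}Q(x)\ge h(N)/N$, and the one-sided bound $Q(b_N^h)\le 1/(Nh(N))$ from right-continuity, dispense with this entirely. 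So your version proves the lemma for an arbitrary tail function $Q$, with no continuity or moment hypothesis, which is a genuine (if modest) improvement over what the paper writes down.
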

\begin{proof}
If $h(N)/N\ge 1$, $\Pr{V_{N-1}< a^h_N}=0$ so the lower bound is trivial. If $h(N)/N<1$ and $h(N)\ge 1$, since $ x\wedge Q(x_0)\le  Q(Q^{-1}(x))\le x$,
\begin{align*}
\Pr{V_{N-1}< a^h_N}&=(1-Q( a^h_N))^N+N(1-Q( a^h_N))^{N-1}Q( a^h_N)\\
&\leq 2h(N)\paren{1-\frac{h(N)}N\wedge Q(x_0)}^{N-1}\enspace.
\end{align*}
Hence, $\Pr{V_{N-1}< a^h_N}\to 0$. Moreover, for any $N$ such that $1/(Nh(N))\le x_0$,
\begin{align*}
\Pr{V_{N}> b^h_N}&=1-\Pr{U< b^h_N}^N=1-\paren{1-\frac{1}{N h(N)}}^N\to 0\enspace.
\end{align*}
\end{proof}

\begin{lemma} \label{lem:cvgv} 
There exists a non-increasing deterministic function $y\to\eta(y)$ on $\R_+$ such that $\lim_{+\infty}\eta=0$ and $
\lim_{N\to\infty}\Pr{B_N}=1
$,
where $B_N=\set{\frac{V_{N}}{\sqrt{N}}\leq \eta(N)}$.
\end{lemma}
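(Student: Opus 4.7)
The plan is to show that $V_N/\sqrt{N}\to 0$ in probability by a direct tail estimate using the assumption $\Ex{U^2}<\infty$, and then to extract a deterministic, non-increasing envelope $\eta$ via a standard diagonal argument.

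First, I would note that the second moment hypothesis implies $x^2Q(x)\to 0$ as $x\to\infty$. Writing $\Ex{U^2}=\int_0^\infty 2yQ(y)dy<\infty$, the tail integrals $\int_x^{2x}2yQ(y)dy$ vanish as $x\to\infty$. By monotonicity of $Q$,
\[
\int_x^{2x}2yQ(y)dy\ge Q(2x)\int_x^{2x}2ydy=3x^2Q(2x),
\]
so $x^2Q(x)\to 0$.

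Next, since $V_N=\max_{i\le N}U_i$, a union bound yields $\Pr{V_N>t}=1-(1-Q(t))^N\le NQ(t)$, so for every fixed $\e>0$,
\[
\Pr{V_N>\e\sqrt{N}}\le NQ(\e\sqrt{N})=\frac{(\e\sqrt{N})^2 Q(\e\sqrt{N})}{\e^2}\xrightarrow[N\to\infty]{}0
\]
by the previous step. This is the core probabilistic content: $V_N/\sqrt{N}\cvp 0$.

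Finally, I would upgrade this pointwise-in-$\e$ convergence to a deterministic, non-increasing envelope $\eta$. For each integer $k\ge 1$, the previous display gives an integer $N_k$ such that $\Pr{V_N>\sqrt{N}/k}\le 1/k$ for every $N\ge N_k$; I choose the sequence $(N_k)_{k\ge 1}$ to be strictly increasing. Define
\[
\eta(N)=\begin{cases}1 & \text{if }N<N_1,\\ 1/k & \text{if }N_k\le N<N_{k+1}.\end{cases}
\]
Then $\eta$ is non-increasing on $\R_+$ (extending by the same formula on the reals), $\lim_{+\infty}\eta=0$, and for every $N\in[N_k,N_{k+1})$,
\[
\Pr{V_N>\eta(N)\sqrt{N}}=\Pr{V_N>\sqrt{N}/k}\le 1/k=\eta(N)\to 0.
\]
Hence $\Pr{B_N}\to 1$, as required.

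The only real substance is the first step: deducing $x^2 Q(x)\to 0$ from finiteness of the second moment. Once that is in hand, the union bound and the diagonal extraction are routine packaging, and the non-increasing requirement on $\eta$ is enforced automatically by the step-function construction.
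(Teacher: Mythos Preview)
Your proof is correct. The core step---deducing $x^2Q(x)\to 0$ from $\Ex{U^2}<\infty$ by comparing $Q(x)$ with a tail integral---is exactly the paper's argument. The packaging differs slightly: the paper translates $Q(x)=o(1/x^2)$ into $Q^{-1}(y)=o(1/\sqrt{y})$, builds a non-decreasing envelope $u$ with $Q^{-1}(y)\le u(y)/\sqrt{y}$, sets $\eta(y)=\sqrt{u(1/y)}$, and then invokes \lemref{bornesvn} with $h=1/\eta$ to conclude. Your route is more direct---a union bound $\Pr{V_N>t}\le NQ(t)$ followed by a diagonal extraction of $\eta$---and avoids both $Q^{-1}$ and the auxiliary lemma. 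Either approach suffices for the later use of $\eta$ in the paper, where only $\eta\downarrow 0$ is needed.
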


\begin{proof}
Since $\Ex{U^2}=\int_0^\infty yQ(y) d y<\infty$, $\lim_{x\to+\infty}\int_x^\infty yQ(y) d y=0$. As $Q$ is non increasing,
	\begin{align*}
		\frac{3}{8}x^2Q(x)=\int_{x/2}^x yd y Q(x)\leq \int_{x/2}^x yQ(y)d y \to 0\enspace.
	\end{align*}
Therefore $Q(x)=o(1/x^2)$ when $x\to+\infty$. This implies that $Q^{-1}(y)=o(1/\sqrt{y})$ when $y\to0$ and there is a non-decreasing function $y\to u(y)$ defined on $\R_+$ such that $\lim_0u=0$ and $Q^{-1}(y)\leq u(y)/\sqrt{y}$. For any $N$ large enough, choosing $y=\frac{\sqrt{u(1/N)}}{N}$
$$Q^{-1}\left(\frac{\sqrt{u(1/N)}}{N}\right)\leq \sqrt{Nu(1/N)}.$$
Setting $\eta(y)=\sqrt{u(1/y)}$, Lemma \ref{lem:bornesvn} used with $h(x)=1/\sqrt{u(1/x)}$ gives the result.
\end{proof}

We also need the following result:

\begin{lemma}\label{lem:ContENV}
	Define $\displaystyle E_N(V)=\frac{1}{N}\sum_{i=1}^{N-1}\frac{V_iV_{N-1}}{V_{N-1}+V_i}$ and 
$$C_N=\set{\frac{1}{4}\Ex{U}\leq E_N(V)\leq 2\Ex{U}}\enspace.$$
Then, 
$\lim_{N\to\infty}\Pr{C_N}=1$.
\end{lemma}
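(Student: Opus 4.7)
The plan is to sandwich $E_N(V)$ between two explicit quantities whose limiting behaviour is controlled by the law of large numbers for $(U_i)$. The key elementary observation is that, by definition of the order statistics, $V_i\le V_{N-1}$ for every $i\le N-1$, so $V_{N-1}\le V_{N-1}+V_i\le 2V_{N-1}$, and hence
\[
\frac{V_i}{2}\ \le\ \frac{V_iV_{N-1}}{V_{N-1}+V_i}\ \le\ V_i\enspace.
\]
Averaging over $i=1,\ldots,N-1$ yields the deterministic sandwich
\[
\frac{1}{2N}\sum_{i=1}^{N-1}V_i\ \le\ E_N(V)\ \le\ \frac{1}{N}\sum_{i=1}^{N-1}V_i\enspace.
\]

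Since $\{V_1,\ldots,V_N\}=\{U_1,\ldots,U_N\}$ as multisets, one has $\sum_{i=1}^{N-1}V_i=\sum_{i=1}^{N}U_i-V_N$. The strong law of large numbers gives $N^{-1}\sum_{i=1}^{N}U_i\to\Ex{U}$ almost surely, while \lemref{cvgv} (applicable because $\Ex{U^2}<\infty$) provides a deterministic $\eta(N)\to 0$ with $\Pr{V_N/\sqrt N\le\eta(N)}\to 1$; in particular $V_N/N\to 0$ in probability. Consequently $N^{-1}\sum_{i=1}^{N-1}V_i\to\Ex{U}$ in probability.

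It follows that with probability tending to $1$, $N^{-1}\sum_{i=1}^{N-1}V_i$ lies in $[\Ex{U}/2,\,3\Ex{U}/2]$; on this event the sandwich forces $E_N(V)\in[\Ex{U}/4,\,3\Ex{U}/2]\subset[\Ex{U}/4,\,2\Ex{U}]$, which is exactly $C_N$. Hence $\Pr{C_N}\to 1$. There is no genuine technical obstacle here: the whole argument is driven by the elementary harmonic-type inequality $V_iV_{N-1}/(V_{N-1}+V_i)\in[V_i/2,V_i]$, which reduces the statement to the well-understood asymptotics of a sum of i.i.d.\ non-negative variables with finite second moment.
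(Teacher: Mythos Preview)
Your proof is correct and essentially identical to the paper's: both use the sandwich $V_i/2\le V_iV_{N-1}/(V_{N-1}+V_i)\le V_i$ coming from $V_i\le V_{N-1}$, rewrite $\sum_{i\le N-1}V_i=\sum_i U_i-V_N$, invoke \lemref{cvgv} for $V_N/N\to 0$ in probability, and conclude by the law of large numbers. The only cosmetic difference is that you cite the strong law where the paper cites the weak law.
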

\begin{proof}
Remark that for $i\in\set{1,\dots,N-1}$, $1\leq1+\frac{V_{i}}{V_{N-1}}\leq 2$, hence
$$
E_N(V)\geq  \frac{1}{2N}\sum_{i=1}^{N-1}V_i= \frac{1}{2N}\sum_{i=1}^{N}U_i-\frac{V_N}{2N}
$$
and
$$
 E_N(V)\leq \frac{1}{N}\sum_{i=1}^{N-1}V_i\le \frac{1}{N}\sum_{i=1}^{N}U_i\enspace .
$$
\lemref{cvgv} ensures that $V_N/N$ converges in probability to 0 and therefore the proof is easily conclude using the Weak Law of Large Numbers.
\end{proof}

We are now in position to bound $\Exp{V}{Z_{N}}$.

\begin{lemma}\label{lem:ENV}
For $N$ large enough, on $B_N\cap C_N$,
	\begin{align*}
 \Exp{V}{Z_{N}}&\le \sum_{i=1}^{N-1}\frac{V_{N-1}}{V_{N-1}+V_i}+\sqrt{8\Ex{U}\frac {N\log N}{V_{N-1}}}\enspace.
 \end{align*}
\end{lemma}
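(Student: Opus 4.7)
The plan is to combine a Chernoff-type maximal inequality in the spirit of Pisier with a stochastic-domination argument exploiting that $V_i\le V_{N-1}$ for every $i\le N-1$. Starting from
\[\lambda\Exp{V}{Z_N}\le \log\sum_{i=1}^{N-1}\Exp{V}{e^{\lambda S_i}} \qquad\text{for any }\lambda>0,\]
I would bound each moment generating function by the analogous quantity obtained by replacing the strength $V_i$ of player $i$ by the larger value $V_{N-1}$. Since $p\mapsto\log(1-p+pe^\lambda)$ is non-decreasing in $p$ for $\lambda\ge 0$ and $V_i/(V_i+V_j)\le V_{N-1}/(V_{N-1}+V_j)$, one gets $\Exp{V}{e^{\lambda S_i}}\le\Exp{V}{e^{\lambda \tilde S_i}}$ with $\tilde S_i=\sum_{j\neq i}\tilde X_{i,j}$ and $\tilde X_{i,j}$ a Bernoulli of parameter $V_{N-1}/(V_{N-1}+V_j)$. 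Bennett's inequality applied to this independent sum then yields
\[\log\Exp{V}{e^{\lambda\tilde S_i}}\le\lambda\Exp{V}{\tilde S_i}+\sigma_i^2\phi(\lambda),\qquad \phi(\lambda)=e^\lambda-1-\lambda,\]
where $\sigma_i^2=\sum_{j\neq i}V_{N-1}V_j/(V_{N-1}+V_j)^2$.

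Next I would control $\max_{i\le N-1}\Exp{V}{\tilde S_i}$ and $\sigma^2=\max_{i\le N-1}\sigma_i^2$. For the expectation, the worst case is $i=N-1$, which discards the term $V_{N-1}/(2V_{N-1})=1/2$; since $V_N\ge V_{N-1}$ the surviving extra term $V_{N-1}/(V_{N-1}+V_N)$ is at most $1/2$, so
\[\max_{i\le N-1}\Exp{V}{\tilde S_i}\le \sum_{j=1}^{N-1}\frac{V_{N-1}}{V_{N-1}+V_j}\enspace.\]
For the variance, the inequality $(V_{N-1}+V_j)^2\ge V_{N-1}^2$ gives $V_{N-1}V_j/(V_{N-1}+V_j)^2\le V_j/V_{N-1}$ for $j\le N-1$, and the $j=N$ term is bounded by $1/4$ via AM-GM. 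The event $C_N$ converts $\sum_{j=1}^{N-1}V_j$ into $\Ex{U}$: the pointwise bound $V_j\le 2 V_j V_{N-1}/(V_{N-1}+V_j)$, valid because $V_j\le V_{N-1}$, combined with $E_N(V)\le 2\Ex{U}$ yields $\sum_{j=1}^{N-1}V_j\le 4N\Ex{U}$, whence $\sigma^2\le 4N\Ex{U}/V_{N-1}+1/4$.

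Finally, inserting both estimates into Pisier's inequality, using $\phi(\lambda)\le\lambda^2/2$ for $\lambda$ small, dividing by $\lambda$ and optimizing in $\lambda=\sqrt{2\log(N-1)/\sigma^2}$ produces the sub-Gaussian estimate
\[\Exp{V}{Z_N}\le \sum_{j=1}^{N-1}\frac{V_{N-1}}{V_{N-1}+V_j}+\sqrt{2\sigma^2\log(N-1)},\]
and $\sqrt{2\sigma^2\log(N-1)}\le\sqrt{8\Ex{U}N\log N/V_{N-1}}$ for $N$ large enough, because the residual $1/4$ in $\sigma^2$ produces only a $\sqrt{(\log N)/2}$ correction which is of lower order than the leading term once the bound $V_{N-1}\le V_N\le\sqrt{N}\,\eta(N)$ provided by $B_N$ forces the main term to diverge. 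The main technical obstacle is the variance bound: the naive Hoeffding sub-Gaussian estimate would give the much weaker $\sqrt{N\log N/2}$ deviation, so extracting the correct $1/V_{N-1}$ scaling genuinely requires both the monotone comparison with $\tilde S_i$ and the conversion of $\sum V_j$ into $N\Ex{U}$ afforded by $C_N$.
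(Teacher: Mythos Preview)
Your strategy coincides with the paper's: apply Pisier's maximal inequality, stochastically dominate each $S_i$ by replacing the strength $V_i$ with $V_{N-1}$, bound the centered moment generating function, and optimize in $\lambda$. The paper packages the domination into a single variable $\Sc=\sum_{k=1}^{N-1}X_k$ with $X_k\sim\Bcal\bigl(V_{N-1}/(V_{N-1}+V_k)\bigr)$ rather than your family $\tilde S_i$, but this is cosmetic and your treatment of the expectation $\max_i\Exp{V}{\tilde S_i}\le\sum_{j=1}^{N-1}V_{N-1}/(V_{N-1}+V_j)$ is correct.

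There is, however, a genuine slip: the inequality $\phi(\lambda)=e^\lambda-1-\lambda\le\lambda^2/2$ is \emph{false} for every $\lambda>0$; the Taylor expansion gives $\phi(\lambda)\ge\lambda^2/2$. The correct elementary bound (equivalent to what the paper's \lemref{LapBernoulli} encodes once $\lambda$ is small) is $\phi(\lambda)\le\lambda^2$ on $[0,1]$, i.e.\ $\log\Exp{V}{e^{\lambda(X-p)}}\le p(1-p)\lambda^2$. With this correct bound and your variance estimate $\sigma^2\le 4N\Ex{U}/V_{N-1}$, the optimization yields $\sqrt{16\,\Ex{U}\,N\log N/V_{N-1}}$, not $\sqrt{8}$. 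The only reason your final constant comes out right is that your variance bound is itself loose by exactly a factor of~$2$: you first use $(V_{N-1}+V_j)^{-2}\le V_{N-1}^{-2}$ and then re-insert a factor $(V_{N-1}+V_j)^{-1}$ via $V_j\le 2V_jV_{N-1}/(V_{N-1}+V_j)$, whereas the direct bound $(V_{N-1}+V_j)^{-2}\le \bigl(V_{N-1}(V_{N-1}+V_j)\bigr)^{-1}$ gives
\[
\sigma_i^2\le \frac{1}{V_{N-1}}\sum_{j=1}^{N-1}\frac{V_jV_{N-1}}{V_{N-1}+V_j}=\frac{N E_N(V)}{V_{N-1}}\le \frac{2N\Ex{U}}{V_{N-1}}\quad\text{on }C_N.
\]
Combining this sharper variance bound with the correct $\phi(\lambda)\le\lambda^2$ (valid because $B_N$ forces $\lambda_N=\sqrt{V_{N-1}\log N/(2N\Ex{U})}\to 0$) recovers the stated constant $\sqrt{8}$; as written, your two inaccuracies accidentally cancel.
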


\begin{proof}
Write 
 \[Z_{N}'=Z_N-\sum_{i=1}^{N-1}\frac{V_{N-1}}{V_{N-1}+V_i}\enspace.\]
For any $\lambda>0$, Jensen's inequality and the argument of Pisier \cite{Pisier:1983} give
 \begin{align*}
 \Exp{V}{Z_{N}'}&\le \frac1{\lambda}\log\Exp{V}{e^{\lambda Z_{N}'}}\\
 &\le  \frac1{\lambda}\log\paren{\sum_{i=1}^{N-1}\Exp{V}{e^{\lambda\paren{S_i-\sum_{i=1}^{N-1}\frac{V_{N-1}}{V_{N-1}+V_i}}}}}\enspace.
 \end{align*}
 Let $\Sc=\sum_{i=1}^{N-1}X_i$, where $X_i$ are independent Bernoulli variables with respective parameters $V_{N-1}/(V_{N-1}+V_i)$. Every $S_i$ is stochastically dominated by $\Sc$, thus
 \begin{align*}
 \Exp{V}{Z_{N}'}&\le  \frac1{\lambda}\log\paren{N\Exp{V}{e^{\lambda\paren{\Sc-\sum_{i=1}^{N-1}\frac{V_{N-1}}{V_{N-1}+V_i}}}}}\enspace.
 \end{align*}
 Let 
\[\lambda_N=\sqrt{\frac{V_{N-1}\log N}{2N\Ex{U}}}\enspace,\]
for $N$ large enough, on $B_N\cap C_N$, $\lambda_N\le 3/(8e^2)<1$.
 \lemref{LapBernoulli} in Appendix evaluates the Laplace transform of Bernoulli distribution and gives here
\begin{align}
\notag\Exp{V}{e^{\lambda_N\paren{\Sc-\sum_{i=1}^{N-1}\frac{V_{N-1}}{V_{N-1}+V_i}}}}&\le e^{\lambda_N^2\sum_{i=1}^{N-1}\frac{V_{N-1}V_i}{(V_{N-1}+V_i)^2}}\enspace.
\end{align}
Therefore,
\begin{align*}
 \Exp{V}{Z_{N}'}&\le \frac{\log N}{\lambda_N}+\frac{2\lambda_NN}{V_{N-1}}\Ex{U}= \sqrt{8\Ex{U}\frac {N\log N}{V_{N-1}}}\enspace.
 \end{align*}	
\end{proof}
Lemmas~\ref{lem:ConcMax}, \ref{lem:cvgv}, \ref{lem:ContENV} and \ref{lem:ENV} give the second part of
\eqref{eq:ConSZ1} for any $u_N\to\infty$ with 
 \begin{equation}\label{bornesupZ}
 z^N=\sum_{i=1}^{N-1}\frac{V_{N-1}}{V_{N-1}+V_i}+\sqrt{8\Ex{U}\frac {N\log N}{V_{N-1}}}+\sqrt{Nu_N}\enspace. 
 \end{equation}

To prove the third item of \eqref{eq:ConSZ1}, it remains to prove that there exists some $u_N\to\infty$ such that the probability of the following event tends to one:
\begin{align}\label{eq:aobtenir}
 \sum_{i=1}^{N-1}\paren{\frac{V_{N}}{V_i+V_{N}}-\frac{V_{N-1}}{V_i+V_{N-1}}}>\sqrt{8\Ex{U}\frac {N\log N}{V_{N-1}}}+2\sqrt{Nu_N}\enspace.
\end{align}

\subsubsection{Proof of \eqref{eq:aobtenir}}
The proof relies on a precise estimate of the difference $V_N-V_{N-1}$. 
\begin{lemma}\label{lem:ControlIcalN}
Let
\[D_N=\set{ V_{N-1}\le V_{N}\left(1-\left(\frac{V_{N-1}}{\sqrt{N}}\right)^{1-\beta}\right)}\enspace.\]
Then $\Pr{D_N}\to 1$ as $N\to \infty$.
\end{lemma}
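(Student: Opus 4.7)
The plan is to exploit the exponential representation of order statistics together with the convexity of $Q^{1/2-\beta}$. Setting $E_i := -\log Q(U_i)$ yields i.i.d.\ exponential variables of rate one, whose order statistics satisfy the classical fact that $\Delta := E_{(N)} - E_{(N-1)}$ is itself exponential of rate one and is independent of $E_{(N-1)}$, hence of $V_{N-1} = Q^{-1}(e^{-E_{(N-1)}})$. Since $V_N = Q^{-1}(Q(V_{N-1}) e^{-\Delta})$, monotonicity of $Q$ turns $D_N$ into
$$
D_N = \set{\Delta \geq R(V_{N-1})}, \qquad R(w) := \log\frac{Q(w)}{Q\paren{w/(1-(w/\sqrt N)^{1-\beta})}},
$$
and independence yields $\Pr{D_N} = \Ex{e^{-R(V_{N-1})}}$. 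It therefore suffices to show that $R(V_{N-1}) \to 0$ in probability.

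To bound $R$, I would use the convexity of $\phi := Q^{p}$ with $p := 1/2-\beta$. Writing $h(w) := |Q'(w)|/Q(w)$ for the hazard rate and $\tau(w) := (w/\sqrt N)^{1-\beta}$, the tangent inequality $\phi(w/(1-\tau)) \geq \phi(w) + \phi'(w)\cdot w\tau/(1-\tau)$ rearranges to
$$
\frac{Q(w/(1-\tau(w)))^{p}}{Q(w)^{p}} \geq 1 - \frac{p\,w\,h(w)\,\tau(w)}{1-\tau(w)},
$$
and Bernoulli's inequality (valid since $1/p > 2$) then yields
$$
R(w) \leq -\frac{1}{p}\log\paren{1 - \frac{p\,w\,h(w)\,\tau(w)}{1-\tau(w)}} \leq \frac{2\,w\,h(w)\,\tau(w)}{1-\tau(w)}
$$
as soon as the argument stays bounded away from $1$. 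The task reduces to showing that $V_{N-1}\,h(V_{N-1})\,\tau(V_{N-1}) = h(V_{N-1})\,V_{N-1}^{2-\beta}/N^{(1-\beta)/2} \to 0$ in probability.

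To close the argument I would combine \lemref{cvgv} (which gives $V_{N-1}/\sqrt N \to 0$ in probability, so $\tau(V_{N-1}) \to 0$) with \lemref{bornesvn} applied to a slowly growing function such as $N \mapsto \log N$, confining $V_{N-1}$ to $[Q^{-1}(\log N/N), Q^{-1}(1/(N\log N))]$ with high probability. On this range, convexity of $Q^{p}$ with $p<1$ additionally forces $Q$ itself to be convex (so $|Q'|$ is non-increasing), while the differential inequality $h' \leq p h^{2}$, derived from $(\log Q)'' \geq -p((\log Q)')^{2}$, controls the growth of $h$. Combined with the consequence $Q(v) = o(1/v^{2})$ of $\Ex{U^{2}} < \infty$ (which is what underlies $V_{N-1} = o(\sqrt N)$ in \lemref{cvgv}), these ingredients are enough to force $h(V_{N-1})V_{N-1}^{2-\beta} = o(N^{(1-\beta)/2})$ on a set of probability tending to $1$.

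The main obstacle is precisely this last step: obtaining the required smallness uniformly over all admissible tail behaviors (polynomial with $b>2$, exponential, stretched exponential, etc.). A robust path is to bound $\Ex{R(V_{N-1})}$ directly. After the change of variables $y = Q(w)$, the density $N(N-1) F(w)^{N-2} Q(w) |Q'(w)|$ of $V_{N-1}$ becomes the tractable $N(N-1)\,y(1-y)^{N-2}\,dy$ on $(0,1)$, concentrated near $y\sim 1/N$; evaluating the convexity-based bound on $R$ against this density and performing a Laplace-type estimate then yields $\Ex{R(V_{N-1})} = o(1)$, whence $\Pr{D_N^{c}} = \Ex{1-e^{-R(V_{N-1})}} \leq \Ex{R(V_{N-1})} \to 0$.
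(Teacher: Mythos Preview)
Your reformulation via the exponential representation is correct (on the event $\{V_{N-1}\ge x_0\}$, which has probability tending to~$1$) and leads to exactly the same intermediate bound as the paper: conditioning on $V_{N-1}$ and using convexity of $Q^{1/2-\beta}$, both routes reduce the problem to showing that
\[
\frac{h(V_{N-1})\,V_{N-1}^{2-\beta}}{N^{(1-\beta)/2}}\to 0\quad\text{in probability},\qquad h=\frac{F'}{Q}\enspace.
\]
This is precisely the paper's inequality~(\ref{majBN}). So up to that point the approaches coincide.

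The gap is the step you yourself flag as ``the main obstacle''. The ingredients you list are not sufficient. Your differential inequality $h'\le ph^2$ is \emph{equivalent} to the monotonicity of $hQ^{p}=F'/Q^{1/2+\beta}$, and the best pointwise bound it yields is $h(x)\le C_0/Q(x)^{1/2-\beta}$. Plugging this in together with the a~priori estimates $V_{N-1}\le\sqrt N\,\eta(N)$ and $Q(V_{N-1})\ge g(N)/N$ (\lemref{bornesvn}, \lemref{cvgv}) leaves a factor $N^{1-\beta}\eta(N)^{2-\beta}/g(N)^{1/2-\beta}$, which has no reason to vanish: for $Q(x)\asymp x^{-(2+\varepsilon)}$ with $\varepsilon$ small one has $\eta(N)\asymp N^{-\varepsilon/(2(2+\varepsilon))}$, and the exponent of $N$ stays positive. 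The ``Laplace-type estimate'' you sketch runs into the same wall, since the integrand still contains $F'(w)^2$ with no independent control.

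What the paper does to close the gap is a H\"older inequality that couples the monotonicity of $F'/Q^{1/2+\beta}$ with the second-moment assumption. Because $F'/Q^{1/2+\beta}$ is non-increasing, its value at $x$ is dominated by its average over $[cx,x]$, so
\[
x^{2-2\beta}\frac{F'(x)}{Q(x)^{1/2+\beta}}\le C_\beta\int_{cx}^{x}\frac{y^{1-2\beta}F'(y)}{Q(y)^{1/2+\beta}}\,dy
\le C_\beta\Bigl(\int_0^\infty y^2F'(y)\,dy\Bigr)^{1/2-\beta}\Bigl(\int_{cx}^{x}\frac{F'(y)}{Q(y)}\,dy\Bigr)^{1/2+\beta},
\]
using H\"older with the conjugate pair $(1/2-\beta)^{-1},(1/2+\beta)^{-1}$. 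The first factor is $\Ex{U^2}^{1/2-\beta}<\infty$; the second equals $(\log(Q(cx)/Q(x)))^{1/2+\beta}$. A further absorption of the logarithm (using again $Q(cx)\le C/x^2$) yields $x^{2-3\beta/2}F'(x)\le C_\beta\,Q(x)^{1/2+3\beta/4}$, and \emph{this} is strong enough: substituted back into the intermediate bound it produces the factor $\eta(N)^{3\beta^2/4}\to 0$. Without this H\"older step, the proof does not close.
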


\begin{proof}
In the proof, $C_\beta$ denotes a deterministic function of $\beta$ which value can change from line to line and $F=1-Q$ denotes the c.d.f. of $U$.

Let $\eta$ be the function defined in \lemref{cvgv} and denote $h=\eta^{-\beta}$. As $x_0$ is in the interior of the support of $Q$, there exists a constant $c<1$ such that $x_0/c$ also lies in the interior of $\supp Q$. Let $a_N^h,b_N^h$ be defined as in \lemref{bornesvn} and consider the event 
\[F_N=\set{\left(x_0/c\right)\vee a_N^h\leq V_{N-1}\leq \sqrt{N}(1/2\wedge \eta(N))\wedge b_N^h}\enspace.\]
 According to \lemref{bornesvn} and \lemref{cvgv}, $\lim_{N\to\infty}\Pr{F_N}=1$. Moreover,
\begin{align*}
 \Pr{D_N^c}&\le \Pr{D_N^c\cap F_N}+\Pr{F_N^c}\\
 &=\Ex{\Ind{F_N}\Pr{D_N^c|V_{N-1}}}+\Pr{F_N^c}\enspace.
 \end{align*}
The cumulative distribution function  of the random variable $V_{N}$ given $V_{N-1}$ is $1-Q/Q(V_{N-1})$ and then
\[\Pr{D_N^c|V_{N-1}}= \frac{Q(V_{N-1})-Q\paren{V_{N-1}\paren{1-\paren{V_{N-1}/\sqrt{N}}^{1-\beta}}^{-1}}}{Q(V_{N-1})}\enspace.\]
On the event $F_N$, the convexity of $Q$ gives 
 \begin{align*}
\Pr{D_N^c|V_{N-1}}&\leq \frac{\paren{V_{N-1}/\sqrt{N}}^{1-\beta}}{1-\paren{V_{N-1}/\sqrt{N}}^{1-\beta}}\frac{V_{N-1}F'(V_{N-1})}{Q(V_{N-1})}
\end{align*}
  and $V_{N}/\sqrt{N}$ is smaller than $1/2$, thus
\begin{align}\label{majBN}
\Pr{D_N^c|V_{N-1}}&\leq C_\beta\frac{V_{N-1}^{2-\beta}F'(V_{N-1})}{N^{(1-\beta)/2}Q(V_{N-1})}\enspace.
\end{align}
Moreover, by convexity of $Q^{1/2-\beta}$, the function $F'/Q^{1/2+\beta}$ is non-increasing, hence, by H\"older's inequality, for any $x\geq x_0/c$,
\begin{align*}
x^{2-2\beta}\frac{F'(x)}{Q(x)^{1/2+\beta}}&\le C_\beta\int_{cx}^x\frac{y^{1-2\beta}F'(y)}{Q(y)^{1/2+\beta}}dy\\
&\le C_\beta\paren{\int_0^{\infty}y^2F'(y)dy}^{1/2-\beta}\paren{\int_{cx}^{x}\frac{F'(y)}{Q(y)}dy}^{1/2+\beta}\\
&\le C_\beta\paren{\log\paren{\frac{Q(cx)}{Q(x)}}}^{1/2+\beta}.
\end{align*}

As seen in the proof of Lemma \ref{lem:cvgv}, $\lim_{y\to\infty}y^2Q(y)=0$, hence $Q(cx)\leq C_{\beta,1}/x^2$ for some constant $C_{\beta,1}$. Therefore for any $x\ge x_0/c$,
\begin{align*}
	x^{2-2\beta}\frac{F'(x)}{Q(x)^{1/2+\beta}}\le C_{\beta}\paren{\log\paren{\frac{C_{\beta,1}}{x^2Q(x)}}}^{1/2+\beta}\enspace.
\end{align*}
The function $g(x)=(x^2Q(x))^{\beta/4}\paren{\log\frac{C_{\beta,1}}{x^2Q(x)}}^{1/2+\beta}$ is upper bounded, this   yields the following inequality:
\begin{align*}
	x^{2-3\beta/2}F'(x)\le C_\beta Q(x)^{1/2+3\beta/4}\enspace.
\end{align*}
This last bound applied to $x=V_{N-1}$  combined with \eqref{majBN} leads to:
\begin{align*}
	\Pr{D_N^c|V_{N-1}}&\leq \frac{C_\beta\left(V_{N-1}\right)^{\beta/2}}{N^{(1-\beta)/2}\paren{Q(V_{N-1})}^{1/2-3\beta/4}} \quad\text{on }F_N.
\end{align*}
Now on $F_N$ the following bounds also hold: 
$$
V_{N-1}\leq \sqrt{N}\eta(N)\text{ and } Q(V_{N-1})\geq \frac{\eta(N)^{\beta}}{N}\enspace,$$
hence,
\begin{align*}
	\Ind{F_N}\Pr{D_N^c|V_{N-1}}&\leq C_{\beta}\eta(N)^{3\beta^2/4}\enspace.
\end{align*}
We conclude the proof by integration of the last inequality.
\end{proof}

Let $h(x)=x/2$ and consider the event 
\begin{align*}
 	G_N&=A_N^{h}\cap B_N\cap C_N\cap D_N\\
 	&=\left\{V_{N-1}\ge Q^{-1}(1/2),\;\frac{\sqrt{N}}{V_N}\geq\frac1{\eta(N)},\ V_{N}-V_{N-1}\geq V_{N}\paren{\frac{V_{N-1}}{\sqrt{N}}}^{1-\beta},\right.\\
	&\qquad  \left.\frac{\Ex{U}}4\le E_N(V)\le 2\Ex{U}\right\}.
 \end{align*}
 According to Lemmas~\ref{lem:bornesvn}, \ref{lem:cvgv}, \ref{lem:ContENV} and \ref{lem:ControlIcalN}, $\Pr{G_N}$ converges to $1$ when $N\to\infty$ so we only have to prove that \eqref{eq:aobtenir} holds on the event $G_N$. And on $G_N$, 
 \begin{multline*}
\sum_{i=1}^{N-1}\paren{\frac{V_{N}}{V_i+V_{N}}-\frac{V_{N-1}}{V_i+V_{N-1}}}\geq\frac{V_{N}-V_{N-1}}{2V_{N}} \sum_{i=1}^{N-1}\frac{V_i}{V_{N-1}+V_i}\\
 \ge
 \sqrt{N}\paren{\frac{\sqrt{N}}{V_{N-1}}}^{\beta}\frac{E_N(V)}{2}
 \ge \frac{\Ex{U}}{8}\sqrt{N}\paren{\frac{\sqrt{N}}{V_{N-1}}}^{\beta}.
 \end{multline*}
Thus, for $N$ large enough, on $G_N$,
\begin{align}
\frac{\sum_{i=1}^{N-1}\paren{\frac{V_{N}}{V_i+V_{N}}-\frac{V_{N-1}}{V_i+V_{N-1}}}}{\sqrt{8\Ex{U}\frac{N\log N}{V_{N-1}}}}&\ge \sqrt{\frac{\Ex{U}}{8^3}}\left(V_{N-1}\right)^{1/2-\beta}\frac{N^{\beta/2}}{\sqrt{\log N}}\notag\\
&\ge \sqrt{\frac{\Ex{U}}{8^3}}\left(Q^{-1}(1/2)\right)^{1/2-\beta}\frac{N^{\beta/2}}{\sqrt{\log N}}\ge 2
\label{cinqa}
\end{align}
and
\[\sum_{i=1}^{N-1}\paren{\frac{V_{N}}{V_i+V_{N}}-\frac{V_{N-1}}{V_i+V_{N-1}}}\ge \frac{\Ex{U}}{8}\sqrt{N}\paren{\frac{\sqrt{N}}{V_{N-1}}}^{\beta}\ge \frac{\Ex{U}}{8}\sqrt{N}\frac{1}{\eta(N)^{\beta}}\enspace.\]
Hence, for a constant $c$ small enough and $u_N=c/\paren{\eta(N)}^{2\beta}$, on $G_N$,
\begin{align}
2\sqrt{Nu_N}< \frac12\sum_{i=1}^{N-1}\paren{\frac{V_{N}}{V_i+V_{N}}-\frac{V_{N-1}}{V_i+V_{N-1}}}. \label{cinqb}
\end{align}
 Bounds \eqref{cinqa} and \eqref{cinqb} imply \eqref{eq:aobtenir}; this concludes the proof of \thmref{QuandAfails}.

\section{ Proof of \thmref{SizeSetWinners} and \thmref{Cutoff}}\label{sec:ButNotAlways}

Remark that in both theorems, each variable $S_i$ for $i\in \set{1,\ldots, N}$ has the same definition, it corresponds to the score of a player with strength $V_i$ playing against opponents with respective strength $\{V_j, j\in\set{1,\ldots, N}\setminus \set{i}\}$. Therefore in both proofs, the notation $Z_N=\max_{i\in\set{1,\ldots,N}}S_i$ represents the same quantity.

We build bounds $s^N_-<s^N_+$ and $z_-^N<z_+^N$ depending only on $\mathbb{V}_{1}^{N}$ such that
\begin{equation}\label{eq:ConSZ}
\Prp{V}{s^N_-\le S_{N+1}\le s^N_+}\to 1,\ \Prp{V}{z_-^N\le Z_N\le  z_+^N}\to 1, \quad \P-a.s. 
\end{equation}
and such that, when $\liminf_{N\to \infty}\frac{v_{N+1}-1}{\epsilon_N}>1$, $\P$-almost surely, for any $N$ large enough, $s^N_->1+z_+^N$, while when $\limsup_{N\to \infty}\frac{v_{N+1}-1}{\epsilon_N}<1$, $\P$-almost surely, for any $N$ large enough, $s^N_+<z_-^N$.
In the first case, it follows that, on $\set{s^N_->1+z_+^N}$, 
\begin{multline*}
\Prp{V}{S_{N+1}>1+Z_N}\ge \Prp{V}{S_{N+1}\ge s^N_-,\; Z_N\le z_+^N}\\
\quad\ge 1-\Prp{V}{S_{N+1}< s^N_-}-\Prp{V}{ Z_N> z_+^N}\enspace. 
\end{multline*}
The result in the second case is obtained with a similar argument. This will establish \thmref{Cutoff}.

For \thmref{SizeSetWinners}, given $\gamma_0<1-\alpha/2<\gamma_1$, we build random bounds $z^{N}_0$ and $z^{N}_1$ depending only on $\mathbb{V}_{1}^{N}$ such that $\P$-almost surely, 
\begin{equation}\label{eq:UpperBoundsinG}
\Prp{V}{\max_{i\in  G_{N^{\gamma_0}}}S_i\le z^{N}_0 }\to 1, \qquad \Pr{\max_{i\notin  G_{N^{\gamma_1}}}S_i\le z^{N}_1 }\to 1\enspace. 
\end{equation}

and
\[\Pr{\liminf\set{z^{N}_0<z_-^{N} }}= \Pr{\liminf\set{z^{N}_1<z_-^{N}} }=1\enspace.\]
On $\set{z^{N}_0<z_-^{N}}$,
\begin{multline*}
\Prp{V}{\max_{i\in  G_{N^{\gamma_0}}}S_i< Z_N}\ge \Prp{V}{\max_{i\in  G_{N^{\gamma_0}}}S_i< z^{N}_0, \; z_-^N< Z_N}\\
\ge1-\Prp{V}{\max_{i\in  G_{N^{\gamma_0}}}S_i\ge  z^{N}_0}-\Prp{V}{ z_-^N< Z_N}\enspace.
\end{multline*}
On $\set{z^{N}_1<z_-^{N} }$,
\begin{multline*}
\Prp{V}{\max_{i\notin  G_{N^{\gamma_1}}}S_i< Z_N}\ge \Prp{V}{\max_{i\notin  G_{N^{\gamma_1}}}S_i< z^{N}_1, \; z_-^N< Z_N}\\
\ge1-\Prp{V}{\max_{i\notin  G_{N^{\gamma_1}}}S_i\ge  z^{N}_1}-\Prp{V}{ z_-^N< Z_N}\enspace.
\end{multline*}
Together, these inequalities yield directly \thmref{SizeSetWinners}.

The construction of $s^N_-$ and $s^N_+$ will derive from the concentration of $S_{N+1}$ given by Hoeffding's inequality: for any $u>0$,
\begin{align}
\label{eq:lowerboundS}\Prp{V}{S_{N+1}\le \sum_{i=1}^N\frac{v_{N+1}}{V_i+v_{N+1}}-\sqrt{\frac{Nu}2}}&\le e^{-u}\enspace.\\
\label{eq:upperboundS}\Prp{V}{S_{N+1}\ge \sum_{i=1}^N\frac{v_{N+1}}{V_i+v_{N+1}}+\sqrt{\frac{Nu}2}}&\le e^{-u}\enspace. 
\end{align}
We will now build the bounds $z_0^{N}$, $z_1^{N}$, $z_-^N$ and $z_+^N$. To do so, we study the concentration of $Z_N$, $\max_{i\in  G_{k_N}}S_i$ and $\max_{i\notin  G_{\ell_N}}S_i$. 
The construction of these bounds is based on the same kind of arguments as the ones used in the previous section. The construction of $z_-^N$ requires a lower bound on $\Exp{V}{Z_N}$ which is obtained by comparison with the maximum of copies of the $S_i$ that are independent, see \lemref{tech}.

\subsection{Construction of $z_0^{N}$, $z_1^{N}$, $z_-^N$ and $z_+^N$}


\lemref{ConcMax} gives the concentration of $Z_N$, $\max_{i\in  G_{N^{\gamma_0}}}S_i$ and $\max_{i\notin  G_{N^{\gamma_1}}}S_i$ around their respective expectations 
which are evaluated in the following lemma.

\begin{lemma}\label{lem:AsymptoticOfEZ_N} $\P$ almost-surely,
\begin{align*}
&\Exp{V}{Z_N}= N\Ex{\frac{1}{1+U}}+\sqrt{(2-\alpha)\vartheta_UN \log N}+o(\sqrt{N \log N})\enspace,\\
& \Exp{V}{\max_{i\notin  G_{N^{\gamma_1}}}S_i}\le N\Ex{\frac1{1+U}}-N^{1/2+\nu}\vartheta_U+o\paren{N^{1/2+\nu}}\enspace,
\end{align*}
where $\nu=\frac{\gamma_1-(1-\alpha/2)}{2\alpha}>0$. In addition, $\P$ almost-surely,
\begin{multline*}
\Exp{V}{\max_{i\in  G_{N^{\gamma_0}}}S_i}\le N\Ex{\frac{1}{1+U}}+\sqrt{2\gamma_0\vartheta_UN \log N}+o(\sqrt{N \log N})\enspace.
\end{multline*}

\end{lemma}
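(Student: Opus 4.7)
The strategy consists of three movements: (i) a uniform Taylor expansion of $\Exp{V}{S_i}$ and $\Var_V(S_i)$ in terms of $V_i$; (ii) a Pisier-type log-sum-exp bound, fed by the asymptotics of (i) and tuned to each index set, to produce the three upper bounds; (iii) for the equality on $Z_N$, a matching lower bound via the independent-copy comparison \lemref{tech}.

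\textbf{Individual asymptotics and Pisier setup.} The identity $\frac{V_i}{V_i+V_j}=\frac{1}{1+V_j}+\frac{V_j(V_i-1)}{(V_i+V_j)(1+V_j)}$, summed over $j\ne i$, together with the SLLN for the iid sequence $(U_j)$, yields $\P$-a.s.
\[
\Exp{V}{S_i}=N\Ex{\tfrac{1}{1+U}}-(1-V_i)\,N\vartheta_U+o(\sqrt{N\log N}),\qquad \Var_V(S_i)=N\vartheta_U(1+o(1)),
\]
uniformly for $V_i$ close to $1$. Since Assumption~\eqref{eq:AssumptionA} with $\alpha<2$ forces $1-V_N=O(N^{-1/\alpha})=o(N^{-1/2})$ a.s., in particular $\Exp{V}{S_N}=N\Ex{1/(1+U)}+o(\sqrt{N\log N})$. \lemref{LapBernoulli} then gives $\Exp{V}{e^{\lambda(S_i-\Exp{V}{S_i})}}\le e^{(\lambda^2/2)\Var_V(S_i)(1+o(1))}$ for $\lambda=O(\sqrt{\log N/N})$, so Pisier's inequality implies, for any $I\subset\{1,\ldots,N\}$ and $\lambda>0$,
\[
\Exp{V}{\max_{i\in I}S_i}\le \tfrac{1}{\lambda}\log\sum_{i\in I}\exp\!\Bigl(\lambda\Exp{V}{S_i}+\tfrac{\lambda^2}{2}\Var_V(S_i)(1+o(1))\Bigr).
\]

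\textbf{The three index sets.} For $I=G_{N^{\gamma_0}}$, all $N^{\gamma_0}$ means are $\le N\Ex{1/(1+U)}+o(\sqrt{N\log N})$, the variances are $\sim N\vartheta_U$, and optimising $\lambda$ yields the claimed $\sqrt{2\gamma_0\vartheta_U N\log N}$. For $I$ the complement of $G_{N^{\gamma_1}}$, Assumption~\eqref{eq:AssumptionA} gives $1-V_{N-N^{\gamma_1}}\ge N^{-(1-\gamma_1)/\alpha+o(1)}$ a.s., so every mean in $I$ is below $N\Ex{1/(1+U)}$ by at least $N^{1-(1-\gamma_1)/\alpha}\vartheta_U(1+o(1))$; because $\nu=(\gamma_1-(1-\alpha/2))/(2\alpha)$ satisfies $1/2+\nu<1-(1-\gamma_1)/\alpha$, this mean shift dominates the $\sqrt{N\log N}$ Pisier fluctuation, giving the stated upper bound. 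The delicate case $I=\{1,\ldots,N\}$ requires the quenched asymptotic
\[
\log\sum_{i=1}^{N}e^{-\lambda(1-V_i)N\vartheta_U}=(1-\alpha)\log N-\alpha\log\lambda+o(\log N),\quad \P\text{-a.s.,}
\]
valid for $\lambda=\Theta(\sqrt{\log N/N})$; this is obtained by Riemann-approximating the top order statistics via $1-V_{N-k+1}\sim(k/N)^{1/\alpha}$ under Assumption~\eqref{eq:AssumptionA} and recognising a gamma integral $\int_0^\infty e^{-cx^{1/\alpha}}\,dx=\Gamma(\alpha+1)/c^\alpha$. Substituted into the Pisier bound, the term $-\alpha\log\lambda/\lambda$ contributes $(\alpha/2)\log N/\lambda$ at $\lambda\sim\sqrt{\log N/N}$, which combines with $(1-\alpha)\log N/\lambda$ to give effective coefficient $1-\alpha/2$; optimising $\lambda$ then yields exactly $\sqrt{(2-\alpha)\vartheta_U N\log N}$.

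\textbf{Lower bound and main difficulty.} For the matching lower bound on $\Exp{V}{Z_N}$ we invoke \lemref{tech}, which replaces the (negatively correlated) $(S_i)$ by independent sub-Gaussian copies with the same leading-order individual Laplace transforms; a classical extreme-value lower bound for the independent maximum then recovers the leading order $\sqrt{(2-\alpha)\vartheta_U N\log N}$, closing the equality. The main obstacle is this sharp constant $\sqrt{2-\alpha}$: recovering it requires carefully tracking the $-\alpha\log\lambda$ term in the logarithm of the MGF sum (without which one would only obtain the coarser $\sqrt{2(1-\alpha)}$), and making the gamma-integral asymptotic hold in the quenched sense demands uniform control of the top $O(N^{1-\alpha/2})$ order statistics of $(U_j)$ under the weak Assumption~\eqref{eq:AssumptionA}, where the $o(\log u)$ error in the tail can produce log-polynomial fluctuations that must be absorbed into the overall $o(\sqrt{N\log N})$.
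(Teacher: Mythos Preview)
Your overall architecture—Pisier's log-sum-exp for the three upper bounds and \lemref{tech} for the matching lower bound—coincides with the paper's. The execution for the sharp upper bound on $\Exp{V}{Z_N}$ is genuinely different, and this is where a comparison is instructive.

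You linearise each mean individually and then propose to evaluate $\log\sum_{i}e^{-\lambda(1-V_i)N\vartheta_U}$ via a Riemann/gamma-integral approximation of the top order statistics. The paper instead subtracts the \emph{common} centring $\sum_k 1/(1+V_k)$ and splits at the single threshold $V_i\ge 1-N^{-1/2+\epsilon}$: players below the threshold are stochastically dominated by one reference score $\Sc'$ whose Laplace transform is $o(e^{-N^\epsilon})$ (hence negligible), while players above are dominated by the score $\Sc$ of a strength-$1$ player and contribute at most $|I_N^\epsilon|\,\Exp{V}{e^{\lambda(\Sc-\Exp{V}{\Sc})}}$ with $|I_N^\epsilon|=N^{1-\alpha/2+\epsilon\alpha+o(1)}$. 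Optimising $\lambda$ gives $\sqrt{(2-\alpha+2\alpha\epsilon)\vartheta_U N\log N}$, and $\epsilon\downarrow 0$ finishes. This buys two things over your route. First, stochastic domination by the single variable $\Sc$ bypasses the need for $\Var_V(S_i)=N\vartheta_U(1+o(1))$ to hold uniformly in $i$—which it does not: for $V_i$ bounded away from $1$ the Bernoulli variances $V_iV_j/(V_i+V_j)^2$ can exceed $V_j/(1+V_j)^2$, so your Pisier bound as written would need a split into ``near $1$'' and ``far from $1$'' anyway. Second, counting $|I_N^\epsilon|$ requires only evaluating $Q$ at a single point, which is robust under the weak hypothesis $\log Q(1-u)=\alpha\log u+o(\log u)$; your gamma-integral asymptotic implicitly assumes a regular-variation-type control of the whole tail, and making it rigorous under \eqref{eq:AssumptionA}—the difficulty you correctly flag—essentially forces one back to the threshold-count argument.

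For the lower bound the paper is more explicit than your sketch: after \lemref{tech} it restricts to $I_N^0=\{i:V_i\ge 1-N^{-1/2}\}$, dominates each such $S_i$ from below by a common variable $S$, and applies a G\"artner--Ellis lower bound (\thmref{GE}) to obtain $\Prp{V}{S-\Exp{V}{S}>\gamma_N}\ge N^{-(1-\alpha/2)+\epsilon/2}$ for $\gamma_N=\sqrt{(2-\alpha-2\epsilon)\vartheta_U N\log N}$; with $|I_N^0|=N^{1-\alpha/2+o(1)}$ independent trials this yields $\Exp{V}{Z_N}\ge(\gamma_N+\Exp{V}{S})(1-o(1))$, and $\epsilon\downarrow 0$ closes the equality.
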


\begin{proof}$ $\\
\emph{Upper bounds.}
Define  $Z_{N}'=Z_N-\sum_{k=1}^{N}\frac{1}{1+V_k}\enspace.$
The law of iterated logarithm ensures that, $\P$-almost surely,
\begin{equation}\label{eq:LogItere}
\sum_{k=1}^{N}\frac{1}{1+V_k}=\sum_{k=1}^N\frac{1}{1+U_k}=N\Ex{\frac{1}{1+U}}+o\paren{\sqrt{N\log N}}\enspace. 
\end{equation}
To bound $\Ex{Z_N}$, it is then sufficient to prove that
$$
\Exp{V}{Z_N'}\leq \sqrt{(2-\alpha)\vartheta_UN \log N}+o(\sqrt{N \log N})\enspace.
$$
Let $\epsilon>0$ and $I_N^\epsilon=\set{i\telque V_i\ge 1-N^{-1/2+\epsilon}}$. By Jensen's inequality and the argument of Pisier \cite{Pisier:1983}, for any $\lambda>0$,

 \begin{align*}
  \Exp{V}{Z_N'}&\le \frac1{\lambda}\log\paren{\paren{\sum_{i\in I_N^\epsilon}+\sum_{i\notin I_N^\epsilon}}\Exp{V}{e^{\lambda\paren{ S_i-\sum_{k=1}^{N}\frac{1}{1+V_k}}}}}\enspace.
\end{align*}
Let $\Sc=\sum_{k=1}^{N}X_k$ where, given $\mathbb{V}_1^N$, the $X_k$ are independent Bernoulli variables with respective parameters $1/(1+V_k)$ and $\Sc'=\sum_{k=1}^{N}Y_k$, where the $Y_k$ are independent Bernoulli variables with respective parameters 
 \[\frac{1-N^{-1/2+\epsilon}}{1-N^{-1/2+\epsilon}+V_k}\enspace.\]
 The variable $\Sc$ represents the score obtained by a player with strength $1$ playing against all the others, so it clearly dominates stochastically each $S_i$. Likewise, $\Sc'$ represents the score obtained by a player with strength $1-N^{-1/2+\epsilon}$ playing against all the others, so it dominates stochastically each $S_i$, with $i\notin I_N^\epsilon$. Therefore,
 \begin{multline}\label{majZ}
  \Exp{V}{Z_N'} \le \\
   \frac1{\lambda}\log\paren{|I_N^\epsilon|\Exp{V}{e^{\lambda\paren{\Sc-\Exp{V}{\Sc}}}}+|(I_N^\epsilon)^c|\Exp{V}{e^{\lambda\paren{\Sc'-\sum_{k=1}^{N}\frac{1}{1+V_k}}}}}.
 \end{multline}
 Let $\lambda_N=C\sqrt{\log N/N}$ where $C$ is a constant that will be defined later. For any $1\leq k\leq N$,
 \[\Exp{V}{Y_k}-\frac1{1+V_k}\le -\frac{N^{\epsilon}}{\sqrt N}\frac{V_k}{(1+V_k)^2}\]
 which gives following the upper bound:
 \begin{align}
&\notag\Exp{V}{e^{\lambda_N\paren{\Sc'-\sum_{k=1}^{N}\frac{1}{1+V_k}}}}\le \prod_{k=1}^N\Exp{V}{e^{\lambda_N\paren{Y_k-\Exp{V}{Y_k}}}}e^{-\lambda_N \frac{N^{\epsilon}}{\sqrt N}\frac{V_k}{(1+V_k)^2}}.
\end{align}
By \lemref{LapBernoulli}, for $N$ large enough,
\begin{align*}
\Exp{V}{e^{\lambda_N\paren{\Sc'-\sum_{k=1}^{N}\frac{1}{1+V_k}}}}&\le \prod_{k=1}^Ne^{\frac{\lambda_N^2}2-\lambda_N \frac{N^{\epsilon}}{\sqrt N}\frac{V_k}{(1+V_k)^2}}\\
&=  e^{-CN^{\epsilon}\sqrt{\log N}\paren{\frac1N\sum_{k=1}^N\frac{U_k}{(1+U_k)^2}-C\frac{\sqrt{\log N}}{2N^{\epsilon}}}}\enspace.
\end{align*}
As the strong law of large numbers shows that, $\P$-almost surely,
$$
\lim_{N\to\infty} \frac1N\sum_{k=1}^N\frac{U_k}{(1+U_k)^2}-C\frac{\sqrt{\log N}}{2N^{\epsilon}}=\vartheta_U>0
$$
 we obtain
\begin{align}
\label{eq:MajTermeReste1}\Exp{V}{e^{\lambda_N\paren{\Sc'-\sum_{k=1}^{N}\frac{1}{1+V_k}}}}=o\left( e^{-N^{\epsilon}}\right),\quad \P-\text{a.s.}\enspace.
\end{align}
We turn now to the other term in the right hand side of \eqref{majZ}: using \lemref{LapBernoulli} and the law of the iterated-logarithm, $\P$-almost-surely,
\begin{align}
\notag \Exp{V}{e^{\lambda_N\paren{\Sc-\sum_{k=1}^{N}\frac{1}{1+V_k}}}}&=\prod_{k=1}^N\Exp{V}{e^{\lambda_N\paren{X_k-\Ex{X_k}}}}\\
\notag &\le e^{\frac{\lambda_N^2}2\sum_{k=1}^N\Var\paren{X_k}+O\paren{\frac{\log^{3/2} N}{\sqrt{N}}}}\\
\label{eq:MajTermePrinc1} &\le e^{\frac{N\lambda_N^2}2\vartheta_U+O\paren{\frac{\log^{3/2} N}{\sqrt{N}}}}\quad \enspace.
\end{align}
It remains to control $|I_N^\epsilon|$. By \eqref{eq:AssumptionA}, $\P$-almost surely,
\[\Pr{U>1-1/N^{1/2-\epsilon}}=N^{-\alpha/2+\epsilon\alpha}e^{o(\log N)}\enspace,\] 
then it is easy to prove, applying Borel-Cantelli lemma, that   
\begin{equation}\label{eq:SizeINeps}
|I_N^\epsilon|= N^{1-\alpha/2+\epsilon\alpha}e^{o(\log N)},\quad \P-\text{almost surely}\enspace. 
\end{equation}

Therefore, \eqref{eq:MajTermeReste1}, \eqref{eq:MajTermePrinc1} and \eqref{eq:SizeINeps} prove that, $\P$-almost-surely
\begin{align*}
  \Exp{V}{Z_N'}
 & \le(1-\alpha/2+\alpha\epsilon)\frac{\log N}{\lambda_N}+\frac{N\lambda_N}2\vartheta_U+o\paren{\frac{\log N}{\lambda_N}}\enspace.
 \end{align*}
 Hence, choosing $C=\sqrt{\frac{(2-\alpha+2\alpha\epsilon)}{\vartheta_U}}$ that is $\lambda_N=\sqrt{\frac{(2-\alpha+2\alpha\epsilon)}{\vartheta_U}\frac{\log N}N}$ , we get 
 \begin{align*}
  \Exp{V}{Z_N'}&\le\sqrt{(2-\alpha+2\alpha\epsilon)\vartheta_UN\log N}+o(\sqrt{N\log N})\quad \P\text{-a.s.}\enspace.
 \end{align*}
As the result holds for any $\epsilon>0$ small enough, this gives the upper bound on $\Exp{V}{Z_N}$.

%
   
Proceeding as in the proofs of \eqref{majZ} and \eqref{eq:MajTermePrinc1}, but choosing now $\lambda_N=\sqrt{\frac{\gamma_0\log N}{N\vartheta_U}}$, we get the upper bound for $\Exp{V}{\max_{i\in  G_{N^{\gamma_0}}}S_i}$.

Applying \eqref{eq:SizeINeps} with $\epsilon=\nu$, we get that, $\P$-almost surely, for $N$ large enough, 
\[|G_{N^{\gamma_1}}|=N^{\gamma_1}=N^{1-\alpha/2+2\alpha\nu}>N^{1-\alpha/2+\alpha\nu}e^{o(\log N)}=|I_N^\nu|\enspace.\]
Therefore, for any $i\notin G_{N^{\gamma_1}}$, $V_i\le 1-1/N^{1/2-\nu}$.
%
We can prove as in the other cases that $\P$-almost surely, 
$$\Exp{V}{\max_{i\notin G_{N^{\gamma_1}}}S_i-\sum_{k=1}^{N}\frac{1-1/N^{1/2-\nu}}{1-1/N^{1/2-\nu}+V_k}}=O(\sqrt{N\log N})\enspace.$$
It remains to remark that, $\P$-almost surely, by \eqref{eq:LogItere} and  the strong law of large numbers,
\begin{align*}
&\sum_{k=1}^{N}\frac{1-1/N^{1/2-\nu}}{1-1/N^{1/2-\nu}+V_k}=
\sum_{k=1}^{N}\frac1{1+V_k}-1/N^{1/2-\nu}\sum_{k=1}^{N}\frac{V_k}{(1+V_k)^2}+o\paren{N^{1/2+\nu}}\\
&=N\Ex{\frac1{1+U}}-N^{1/2+\nu}\vartheta_U+o\paren{N^{1/2+\nu}}\enspace.
\end{align*}
This concludes the proof of the upper bound on $\Exp{V}{\max_{i\notin  G_{N^{\gamma_1}}}S_i}$.

\medskip

\emph{Lower bound on $\Exp{V}{Z_N}$.}
Let us start with the following lemma which says that $Z_N$ stochastically dominates the maximum of independent
copies of the variables $S_i$.

\begin{lemma}\label{lem:tech}
For any $a>0$ we have, $\P$-almost surely, 
$$\Prp{V}{Z_N\le a}\le \prod_{i=1}^N\Prp{V}{S_i\le a}\enspace.$$
\end{lemma}

\begin{proof} We proceed by induction, we provide a detailed proof of the first step, the other ones follow the same lines.
Let $\tilde{X}_{2,1}$ denote a copy of $X_{2,1}$, independent of $(X_{i,j})_{1\le i<j\le N}$ and let $S^1_2=\tilde{X}_{2,1}+\sum_{i= 3}^NX_{2,i}$, $M_2=S_1\vee S_2$, $\tilde{M}_2=S_1\vee S^1_2$, $A=\set{\max_{i\ge 3}S_i\le a}$. Write 
$\set{Z_N\le a}=\set{M_2\le a}\cap A$. Simple computations show that:
\begin{align*}
\Prp{V}{Z_N\le a}=&\Prp{V}{\set{\tilde{Z}_N\le a}}-\Prp{V}{\set{\tilde{M}_2= a}\cap\set{M_2=\tilde{M}_2+1}\cap A}\\
 &\quad+\Prp{V}{\set{\tilde{M}_2= a+1}\cap\set{M_2=\tilde{M}_2-1}\cap A}\enspace.
\end{align*}
In addition, for $x\in\{0,1\}$,
\begin{align*}
\set{M_2-\tilde{M}_2= 1-2x} &= \set{X_{1,2}=\tilde{X}_{2,1}=x\ ,\ \sum_{i=3}^NX_{2,i}\ge x+\sum_{i=3}^NX_{1,i}}\enspace.
\end{align*}
Now, recall that $X_{1,2}$ and $\tilde{X}_{1,2}$ are independent of $\sum_{i=3}^NX_{2,i},\sum_{i=3}^NX_{1,i}$ and $A$, then, for $x\in\{0,1\}$,
\begin{multline*}
\Prp{V}{\set{\tilde{M}_2= a+x}\cap\set{M_2=\tilde{M}_2+1-2x}\cap A}\\
=\Prp{V}{X_{1,2}=\tilde{X}_{2,1}=x}\Prp{V}{\set{\sum_{i=3}^NX_{2,i}=a,\sum_{i=3}^NX_{1,i}\le a-x}\cap A} 
\end{multline*}

As $\Prp{V}{X_{1,2}=\tilde{X}_{2,1}=0}=\Prp{V}{X_{1,2}=\tilde{X}_{2,1}=1}$ 
we obtain
\[\Prp{V}{Z_N\le a}\le \Prp{V}{\tilde{Z}_N\le a}\enspace.\]
\end{proof}
%
%
Let $I^0_N=\set{i\telque V_i\ge 1-N^{-1/2}}$ and $S=\sum_{i=2}^{N}X_i$ where the $X_i$ are independent and $X_i\sim\Bcal(1/(1+V_i/(1-N^{-1/2}))$. The variable $S$ is stochastically dominated by any $S_i$ with $i\in I^0_N$. It follows from \lemref{tech} that, 
\[\Prp{V}{Z_N< a}\le \prod_{i=1}^{N}\Prp{V}{S_i< a}\le \prod_{i\in I^0_N}\Prp{V}{S_i< a}\le \Prp{V}{S< a}^{|I^0_N|}\enspace.\]
For any $\epsilon\in(0,1-\alpha/2)$, denote $\gamma_N=\sqrt{(2-\alpha-2\epsilon)\vartheta_UN\log N}$. The previous inequality yields 
\begin{align*}
 \Exp{V}{Z_N}&\geq(\gamma_N+\Exp{V}{S})\Prp{V}{Z_N-\Exp{V}{S}\geq\gamma_N}\\
&\geq (\gamma_N+\Exp{V}{S})(1-\Prp{V}{S-\Exp{V}{S}<\gamma_N}^{|I^0_N|})\enspace.
 \end{align*}
Denote $\lambda_N=\sqrt{\frac{\log N}N}$. By \lemref{LapBernoulli}, for any $u\in \R_+$ and any $N$ such that $u\lambda_N\le 1$,
\begin{align*}
 \log\Exp{V}{e^{u\lambda_N(S-\Ex{S})}}
 &=\sum_{i=1}^{N-1}\paren{\frac{u^2\lambda_N^2}2\frac{V_i}{(1+V_i)^2}+O(\lambda_N^3)}\\
 &=\frac{u^2}2\vartheta_U\log N+O\paren{\frac{(\log N)^{3/2}}{\sqrt{N}}}\ \P\mbox{-a.s.}\enspace.
\end{align*}
The last line is obtained thanks to the law of iterated logarithm. Hence,
\[\lim_{N\to\infty}\frac1{\log N}\log\Exp{V}{e^{u\log N\frac{S-\Exp{V}{S}}{\sqrt{N\log N}}}}= \frac{u^2}2\vartheta_U\enspace.\]
The same argument applied on the variables $-X_i$ shows that the previous inequality actually holds for any $u\in \R$.
Therefore, using \thmref{GE} in Appendix with the sequence of random variables $\zeta_N=\frac{S-\Exp{V}{S}}{\sqrt{N\log N}}$,
\[\liminf_{N}\frac1{\log N}\log\Prp{V}{S-\Exp{V}{S}>\gamma_N}\ge -1+\alpha/2+\epsilon\enspace.\]
In particular, since 
$\log |I_N^0|\sim (1-\alpha/2)\log N$, for $N$ large enough
\begin{align*}
\Prp{V}{S-\Exp{V}{S}<\gamma_N}^{|I^0_N|}
&\le \paren{1-N^{-1+\alpha/2+\epsilon/2}}^{|I_N^0|}\\
&\le e^{-N^{\epsilon/4}} 
\enspace.
\end{align*}
Since, by the law of iterated logarithm,
\begin{align*}
\Exp{V}{S}&=\sum_{i=1}^N\frac{1-1/\sqrt{N}}{1-1/\sqrt{N}+U_i}-\frac{1}{1+V_1}\\
&\ge N\Ex{\frac1{1+U}}+o(\sqrt{N\log N})\enspace, 
\end{align*}
we  obtain that, for any $\epsilon>0$,
\[\Exp{V}{Z_N}\geq N\Ex{\frac1{1+U}}+\sqrt{(2-\alpha-2\epsilon)\vartheta_UN\log N}+o\paren{\sqrt{N\log N}}\]
which concludes the proof.
\end{proof}
%
\subsection{Conclusion of the proof of \thmref{Cutoff}}
%
Choosing $u=\log\log N$ in \eqref{eq:lowerboundS}, a slight extension of the law of iterated logarithm gives that,  $\P$-almost surely, with $\P_V$-probability going to $1$, 
\begin{align*}
S_{N+1}&\ge \sum_{i=1}^N\frac{v_{N+1}}{V_i+v_{N+1}}-\sqrt{\frac{Nu_N}2}=N\Ex{\frac{v_{N+1}}{U+v_{N+1}}}+o\paren{\sqrt{N\log N}}
\end{align*}
and
\[S_{N+1}\le \sum_{i=1}^N\frac{v_{N+1}}{V_i+v_{N+1}}+\sqrt{\frac{Nu_N}2}=N\Ex{\frac{v_{N+1}}{U+v_{N+1}}}+o\paren{\sqrt{N\log N}}\enspace.\]
Therefore, there exists $\epsilon^1_N\to0$ such that the first statement of \eqref{eq:ConSZ} holds $\P$-almost surely with 
\[s_{\pm}^N=N\Ex{\frac{v_{N+1}}{U+v_{N+1}}}\pm\epsilon^1_N\sqrt{N\log N}\enspace.\]
By \lemref{ConcMax} 
with $u=\log\log N$, and \lemref{AsymptoticOfEZ_N},  $\P$-almost surely, with $\P_V$-probability going to $1$,
\begin{align*}
Z_N
&= N\Ex{\frac{1}{1+U}}+\sqrt{(2-\alpha)\vartheta_U N \log N}+o\paren{\sqrt{N\log N}}.
\end{align*}
Therefore, there exists $\epsilon_N^2\to0$ such that the second statement of \eqref{eq:ConSZ} holds with 
\begin{align*}
z_+^N+1&=N\Ex{\frac{1}{U+1}}+\sqrt{(2-\alpha)\vartheta_UN \log N}+\epsilon_N^2\sqrt{N\log N}\enspace,\\
z_-^N&=N\Ex{\frac{1}{U+1}}+\sqrt{(2-\alpha)\vartheta_UN \log N}-\epsilon_N^2\sqrt{N\log N}\enspace. 
\end{align*}
Moreover,
\[
\Ex{\frac{v_{N+1}}{U+v_{N+1}}-\frac{1}{1+U}}=(v_{N+1}-1)\Ex{\frac{U}{(U+1)(U+v_{N+1})}}\enspace.\]
Hence, denoting $\epsilon^3_N=\epsilon_N^1+\epsilon_N^2$, the inequality $s_-^N>z_+^N+1$ is verified if 
\begin{align*}
(v_{N+1}-1)\Ex{\frac{U}{(U+1)(U+v_{N+1})}}&\ge \paren{\sqrt{(2-\alpha)\vartheta_U}+\epsilon^3_N}\sqrt{\frac{\log N}N}\\
\end{align*} 
that is if
\begin{align}\label{minor}
\frac{v_{N+1}-1}{\epsilon_N}\frac{\Ex{\frac{U}{(U+1)(U+v_{N+1})}}}{\vartheta_U}&\ge 1+\frac{\epsilon^3_N}{\sqrt{(2-\alpha)\vartheta_U}}\enspace.
\end{align} 
where $\epsilon_N=\sqrt{2-\alpha}\;\vartheta_U^{-1/2}\sqrt{\frac{\log N}{N}}$ is the value appearing in the statement of \thmref{Cutoff}.
We now prove by contradiction that 
\begin{align}\label{minor2}
\liminf_{N\to\infty}\frac{v_{N+1}-1}{\epsilon_N}\frac{\Ex{\frac{U}{(U+1)(U+v_{N+1})}}}{\vartheta_U}>1\enspace.
\end{align} 
Suppose  there is a subsequence of $v_{N+1}$ (that we still call $v_{N+1}$) such that \eqref{minor2} is not true. As $\liminf(v_{N+1}-1)/\epsilon_N>1$, it means that for $N$ sufficiently large, $v_{N+1}\ge 1+\delta$ for some $\delta>0$. But in this case, the LHS of \eqref{minor2} clearly goes to infinity as $N\to\infty$. That contradicts our initial assumption and then \eqref{minor} is verified for $N$ large enough.

The proof that $s_+^N<z_-^N$ when $\liminf(v_{N+1}-1)/\epsilon_N<1$ follows the same arguments.
\subsection{Conclusion of the proof of \thmref{SizeSetWinners}}
By  Lemma \ref{lem:ConcMax} with $u=\log\log N$ and 
\lemref{AsymptoticOfEZ_N},
 $\P$-almost surely, with $\P_V$-probability going to $1$,
\begin{align*}
\max_{i\in G_{N^{\gamma_0}}}S_i &\leq N\Ex{\frac{1}{1+U}}+\sqrt{2\gamma_0\vartheta_UN \log N}+o\paren{\sqrt{N\log N}} \enspace. 
\end{align*}
Since $\gamma_0<1-\alpha/2$, the first item of \eqref{eq:UpperBoundsinG} holds for $N$ large enough with 
\[z_{0}^N=N\Ex{\frac{1}{1+U}}+\sqrt{\paren{\gamma_0+1-\frac\alpha2}\vartheta_UN \log N}\enspace.\]
It is clear that $z_0^N<z_-^N$ for $N$ large enough since, by definition $\gamma_0+1-\frac\alpha2<2-\alpha$.
By 
\lemref{ConcMax}
with $u=\log\log N$ and \lemref{AsymptoticOfEZ_N}, $\P$-almost surely, with $\P_V$-probability going to $1$,
\begin{align*}
\max_{i\notin G_{N^{\gamma_1}}}S_i&\leq N\Ex{\frac1{1+U}}-N^{1/2+\nu}\vartheta_U+o\paren{N^{1/2+\nu}}\enspace,
\end{align*}
where $\nu=\frac{\gamma_1-(1-\alpha/2)}{2\alpha}>0$. Hence, the second item of \eqref{eq:UpperBoundsinG} holds for $N$ large enough with 
\[z_{1}^N=N\Ex{\frac{1}{1+U}}\enspace,\]
which is clearly smaller than $z_-^N$.
\appendix
\section*{\large Appendix}
To evaluate the various expectations of suprema of random variables, the following result is used repeatedly. Its proof is straightforward and therefore omitted.
\begin{lemma}\label{lem:LapBernoulli}
 Let $X$ be a Bernoulli distribution with parameter $p\in[0,1]$ and $a\in[0, 1]$, then
 \[1+\frac{a^2}2p(1-p)\le \Ex{e^{a(X-\Ex{X})}}\le e^{p(1-p)a^2\paren{\frac{1}2+\frac{4 e^2}3a}}\enspace.\]
\end{lemma}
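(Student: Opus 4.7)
My plan is to prove both inequalities by direct computation on the explicit centered Bernoulli Laplace transform
\[g(a) := \Ex{e^{a(X-\Ex{X})}} = p\, e^{a(1-p)} + (1-p)\, e^{-ap}\enspace.\]
Expanding both exponentials in Taylor series and regrouping powers of $a$ yields $g(a) = \sum_{k\ge 0}\frac{a^k}{k!}\mu_k$ with $\mu_k := p(1-p)^k + (1-p)(-p)^k$. Direct evaluation gives $\mu_0 = 1$, $\mu_1 = 0$, $\mu_2 = p(1-p)$, and for $k \ge 3$ the factorization $\mu_k = p(1-p)\bigl[(1-p)^{k-1} - (-p)^{k-1}\bigr]$, whose modulus is at most $p(1-p)$ because $(1-p)^{k-1}$ and $(-p)^{k-1}$ each lie in $[-1,1]$.

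For the \emph{upper bound} I would isolate the first three terms and dominate the tail by
\[p(1-p) \sum_{k\ge 3}\frac{a^k}{k!} \le \frac{e}{6}\, p(1-p)\, a^3\qquad\text{on } a\in[0,1],\]
which gives $g(a) \le 1 + p(1-p) a^2\bigl(\tfrac12 + \tfrac{e}{6}a\bigr)$. Using $1 + x \le e^x$ to collect the right-hand side into a single exponential then produces a bound of the stated form, the constant $\tfrac{4 e^2}{3}$ being deliberately generous so that no optimization is needed.

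For the \emph{lower bound} the natural route is Taylor's theorem with integral remainder: since $g(0) = 1$ and $g'(0) = 0$,
\[g(a) = 1 + \int_0^a (a-t)\, g''(t)\, dt\enspace,\]
so it suffices to establish the pointwise estimate $g''(t) \ge p(1-p)$ on $[0, a]$. Using the explicit identity $g''(t) = p(1-p)\bigl[(1-p)\, e^{t(1-p)} + p\, e^{-tp}\bigr]$, this reduces to showing that the bracketed two-term weighted sum of exponentials is at least $1$ for every $t \in [0,1]$ and every $p \in [0,1]$.

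The main technical point is this last reduction, carried out uniformly in $p$. A plain Jensen step yields $(1-p)\, e^{t(1-p)} + p\, e^{-tp} \ge e^{t(1-2p)}$, which is $\ge 1$ only when $p \le 1/2$; so the regime $p > 1/2$ requires either a sharper analysis of the two-term exponential sum (for instance by checking that its derivative in $t$ has the right sign on $[0,1]$) or an exploitation of the symmetry $X \leftrightarrow 1-X$ that swaps $p$ with $1-p$ while reversing the sign of $a$. This is the only non-mechanical step in the argument and is presumably what the authors have in mind by calling the proof straightforward.
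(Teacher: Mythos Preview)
Your upper bound is correct (and in fact yields the sharper constant $e/6$ in place of $4e^2/3$). The paper omits its own proof, so there is nothing to compare against on that side.

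For the lower bound, the step you single out as ``non-mechanical'' is not merely delicate: it is false, and so is the stated lower bound of the lemma on the full range $p\in[0,1]$, $a\in[0,1]$. At $p=0.9$, $t=1$ one computes $(1-p)e^{t(1-p)}+pe^{-tp}=0.1\,e^{0.1}+0.9\,e^{-0.9}\approx 0.476<1$, so $g''(t)<p(1-p)$ there; and indeed $g(1)=0.9\,e^{0.1}+0.1\,e^{-0.9}\approx 1.0353<1.045=1+\tfrac12 p(1-p)$. Your own expansion explains why: $\mu_3=p(1-p)(1-2p)$ is negative for $p>\tfrac12$, so $g(a)$ dips below $1+\tfrac12 p(1-p)a^2$ for $a>0$. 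The symmetry $X\leftrightarrow 1-X$ you invoke does swap $p$ with $1-p$, but it also sends $a$ to $-a$; from the case $p\le\tfrac12,\ a\ge 0$ it therefore only recovers $p\ge\tfrac12,\ a\le 0$, and cannot rescue the regime $p>\tfrac12$, $a>0$. What the paper actually uses (in the lower bound on $\Exp{V}{Z_N}$) is only the two-sided asymptotic $\log g(a)=\tfrac12 p(1-p)a^2+O(a^3)$ uniformly in $p$, and that does follow immediately from your bound $|\mu_k|\le p(1-p)$ for $k\ge 2$, which gives $\bigl|g(a)-1-\tfrac12 p(1-p)a^2\bigr|\le \tfrac{e}{6}\,p(1-p)\,a^3$. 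So the lemma is slightly misstated; your Taylor argument already proves the corrected version that is both true and sufficient for the paper's purposes.
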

\medskip

We also recall for reading convenience two well-known results. The first one is the bounded difference inequality (see Theorem 5.1 in \cite{Massart:2007}):

\begin{theorem}
\label{thm:bdi}
	Let $\Xcal^n=\Xcal_1\times\dots\times\Xcal_n$ be some product measurable space and $\Psi:\Xcal^n\to\R$ be some measurable functional satisfying the bounded difference condition:
	$$
	\absj{\Psi(x_1,\dots,x_i,\dots,x_n)-\Psi(x_1,\dots,y_i,\dots,x_n)}\leq1
	$$
	for all $x\in\Xcal^n,\ y\in\Xcal^n$, $i\in\set{1,\dots,n}$. Then the random variable $Z=\Psi(X_1,\dots,X_n)$ satisfies for any $u>0$,
	$$
	\Pr{Z\geq\Ex{Z}+\sqrt{\frac n2u}}\leq e^{-u}\quad \text{ and }\quad \Pr{Z\leq\Ex{Z}-\sqrt{\frac n2u}}\leq e^{-u}\enspace.
	$$
\end{theorem}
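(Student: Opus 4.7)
The plan is to prove this via the classical martingale method (Azuma--Hoeffding). Introduce the filtration $\Fcal_0=\{\emptyset,\Omega\}$ and $\Fcal_k=\sigma(X_1,\dots,X_k)$ for $1\le k\le n$, together with the Doob martingale $Z_k=\Ex{Z\mid \Fcal_k}$, so that $Z_0=\Ex{Z}$, $Z_n=Z$, and $Z-\Ex{Z}=\sum_{k=1}^n D_k$ with martingale differences $D_k=Z_k-Z_{k-1}$. By the Chernoff method, it then suffices to control the conditional moment generating function of each $D_k$ and use the tower property.

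The main technical step will be to show that, conditionally on $\Fcal_{k-1}$, the variable $D_k$ is contained in an interval of length at most $1$. By independence of the $X_i$, one can write $Z_k=g_k(X_1,\dots,X_k)$ with $g_k(x_1,\dots,x_k)=\Ex{\Psi(x_1,\dots,x_k,X_{k+1},\dots,X_n)}$, and $Z_{k-1}=\int g_k(X_1,\dots,X_{k-1},x_k)\,d\mu_k(x_k)$ where $\mu_k$ is the law of $X_k$. Hence $D_k$ equals the $\mu_k$-integral of $g_k(X_1,\dots,X_{k-1},X_k)-g_k(X_1,\dots,X_{k-1},x_k)$, and the bounded difference condition, applied pointwise inside the expectation defining $g_k$, yields $\absj{g_k(x_1,\dots,x_{k-1},y)-g_k(x_1,\dots,x_{k-1},y')}\le 1$ for all $y,y'$. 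This gives the required conditional range bound on $D_k$.

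Once this is established, Hoeffding's lemma applied conditionally delivers $\Ex{e^{\lambda D_k}\mid \Fcal_{k-1}}\le e^{\lambda^2/8}$ almost surely, and iterating through the tower property yields $\Ex{e^{\lambda(Z-\Ex{Z})}}\le e^{n\lambda^2/8}$. A Chernoff bound with the optimal choice $\lambda=4t/n$, specialized to $t=\sqrt{nu/2}$, then produces the stated tail $e^{-u}$. The lower tail follows by running the same argument for $-\Psi$, which also satisfies the bounded difference condition. The main obstacle is the pointwise control of the martingale increments: it relies simultaneously on the bounded difference hypothesis and on the product structure of $\Xcal^n$, since independence is precisely what allows $Z_{k-1}$ to be rewritten as an integral of $g_k$ against $\mu_k$. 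The remaining steps are a standard Azuma--Hoeffding/Chernoff computation.
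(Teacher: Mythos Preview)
The paper does not actually prove this theorem: it is stated in the Appendix as a well-known result, with a reference to Theorem~5.1 in \cite{Massart:2007}, and no proof is given. There is therefore nothing in the paper to compare your argument against.

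That said, your proposal is the standard and correct proof. The Doob martingale decomposition is exactly the right object; the key step---showing that each increment $D_k$ lies, conditionally on $\Fcal_{k-1}$, in an interval of length at most $1$---is handled correctly using independence to rewrite $Z_{k-1}$ as an integral of $g_k$ against $\mu_k$, together with the bounded difference hypothesis inside the expectation. Hoeffding's lemma then gives $\Ex{e^{\lambda D_k}\mid\Fcal_{k-1}}\le e^{\lambda^2/8}$, the tower property yields $\Ex{e^{\lambda(Z-\Ex{Z})}}\le e^{n\lambda^2/8}$, and the Chernoff bound with $\lambda=4t/n$ and $t=\sqrt{nu/2}$ gives exactly $e^{-u}$. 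The lower tail via $-\Psi$ is immediate. This is precisely the argument one finds in Massart's book, so your proof is both correct and aligned with the reference the paper cites.
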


\medskip

The second result is a simple consequence of G\"artner-Ellis theorem (see Theorem 2.3.6 in \cite{Dembo_Zeitouni:2002}) and of the Fenchel-Legendre transform of a centered Gaussian distribution.
\begin{theorem}\label{thm:GE}
Consider a sequence of r.v. $\left(\zeta_n\right)_{n\in\N}$ and a deterministic sequence $(a_n)_{n\in\N}\to\infty$ such that for any $u\in\R$, 
$$\lim_{n\to\infty}\frac{1}{a_n}\log\Ex{e^{ua_n\zeta_n}}=\frac{u^2\sigma^2}2\enspace.
$$
Then, for any $x>0$,
$$
\liminf_{n\to\infty}\frac{1}{a_n}\log\Pr{\zeta_n> x}\geq -\frac{x^2}{2\sigma^2}\enspace.
$$	
\end{theorem}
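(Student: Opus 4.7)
The strategy is exactly the one the authors flag just before the statement: invoke the G\"artner--Ellis theorem with a Gaussian limiting log-moment generating function, and compute its Fenchel--Legendre transform. I would set $\Lambda_n(u)=a_n^{-1}\log\Ex{e^{ua_n\zeta_n}}$ and $\Lambda(u)=u^2\sigma^2/2$, so that the hypothesis reads $\Lambda_n\to\Lambda$ pointwise on $\R$. Since $\Lambda$ is finite, smooth and strictly convex on all of $\R$, it is in particular lower semi-continuous and essentially smooth, with effective domain equal to $\R$ (so $0$ lies in its interior). These are precisely the hypotheses required to apply G\"artner--Ellis (Theorem~2.3.6 of \cite{Dembo_Zeitouni:2002}).

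The conclusion of G\"artner--Ellis is then the full large deviation principle for $(\zeta_n)$ at speed $a_n$ with good rate function $\Lambda^*(y)=\sup_{u\in\R}(uy-u^2\sigma^2/2)$. A one-line optimization (the standard Fenchel--Legendre transform of a centered Gaussian, with maximum attained at $u=y/\sigma^2$) gives $\Lambda^*(y)=y^2/(2\sigma^2)$.

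Finally, I would apply the LDP lower bound to the open set $(x,\infty)$ to conclude
\[
\liminf_{n\to\infty}\frac{1}{a_n}\log\Pr{\zeta_n>x}\ \ge\ -\inf_{y>x}\Lambda^*(y)\ =\ -\frac{x^2}{2\sigma^2},
\]
the last equality holding because $y\mapsto y^2/(2\sigma^2)$ is continuous and strictly increasing on $[0,\infty)$. I do not really see an obstacle here: both tools are textbook, and the regularity assumptions of G\"artner--Ellis are automatic because $\Lambda$ is real-analytic and everywhere finite, so there is nothing to check about essential smoothness or steepness at a boundary.
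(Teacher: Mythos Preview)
Your proposal is correct and matches the paper's approach exactly: the paper does not give a detailed proof but simply states that the result is a consequence of the G\"artner--Ellis theorem together with the Fenchel--Legendre transform of a centered Gaussian, which is precisely what you carry out.
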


 \bibliographystyle{plain}
 \bibliography{BT}

\end{document}